\newcommand{\V}{{\mathcal V}}
\newcommand{\Un}{{\mathcal U}}
\newcommand{\proj}{\mathbb P}
\DeclareMathOperator{\loc}{\mathrm{Locus}}
\DeclareMathOperator{\cloc}{\mathrm{ChLocus}}
\DeclareMathOperator{\ch}{\mathrm{Chain}}
\newcommand{\ratcurves}{\textrm{Ratcurves}^n(X)}
\newcommand{\cone}{\textrm{NE}}
\DeclareMathOperator{\cycl}{N_1}
\DeclareMathOperator{\pic}{Pic}
\newcommand{\sa}{\mspace{1mu}}
\newcommand{\W}{{\mathcal{W}}}
\newcommand{\conx}[1]{\cone\,(#1,X)}
\newcommand{\cycx}[1]{\cycl(#1,X)}
\newcommand{\rc}[2]{#1 \xymatrix{\ar@{-->}[r] & }{#2}}
\newtheorem{theorem}{Theorem}[section]
\newtheorem{lemma}[theorem]{Lemma}
\newtheorem{conjecture}[theorem]{Conjecture}
\newtheorem{proposition}[theorem]{Proposition}
\newtheorem{corollary}[theorem]{Corollary}
\theoremstyle{definition}
\newtheorem{definition}[theorem]{Definition}
\theoremstyle{remark}
\newtheorem{remark}[theorem]{Remark}
\newtheorem{construction}[theorem]{Construction}
\begin{document}
\title[Bounds on the Picard number of Fano manifolds]{Rational curves and bounds on the Picard number of Fano manifolds}

\author{Carla Novelli}
\address{Dipartimento di Matematica,\newline  ``F. Enriques'', Universit\`a di Milano,\newline via C. Saldini 50, I-20133 Milano}
\curraddr{Dipartimento di Matematica ``F.Casorati'', \newline Universit\`a di Pavia,\newline via Ferrata 1, \newline I-27100 Pavia}
\email{carla.novelli@unipv.it}
\author{Gianluca Occhetta}
\address{Dipartimento di Matematica,\newline Universit\`a di Trento, \newline via Sommarive 14,\newline I-38050 Povo (TN)}
\email{gianluca.occhetta@unitn.it}

\subjclass[2000]{ 14J45, 14E30}

\maketitle

\begin{abstract}
We prove that Generalized Mukai Conjecture holds for Fano manifolds $X$ of
pseudoindex $i_X \ge (\dim X +3)/3$. We also give different proofs of the conjecture for
Fano fourfolds and fivefolds.
\end{abstract}

\section{Introduction}

Let $X$ be a Fano manifold, {\em i.e.} a smooth complex projective variety whose
anticanonical bundle $-K_X$ is ample.  The {\em index} of a Fano manifold $X$ is defined as
$$r_X := \max \{m \in \mathbb N\ |-K_X = mL  {\mathrm{\ for \ some \ line \ bundle \ }} L  \},$$
while the {\em pseudoindex} of $X$ is defined as
$$i_X := \min \{m \in \mathbb N\ |-K_X \cdot C = m {\mathrm{\ for \ some \ rational \ curve \ }} C \subset X \}.$$
We denote by $\rho_X$ the Picard number of $X$,
{\em i.e.} the dimension of the $\mathbb R$-vector space $\cycl(X)$ of $1$-cycles modulo numerical
equivalence.\par
\medskip
In 1988, Mukai \cite{Kata} proposed the following conjecture:\par
\begin{conjecture} \label{M}
Let $X$ be a Fano manifold of dimension $n$. Then $\rho_X(r_X - 1) \le n$,
with equality if and only if $X = (\proj^{r_X-1})^{\rho_X}$.
\end{conjecture}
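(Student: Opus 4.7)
The plan is to attack the stronger Generalized Mukai statement $\rho_X(i_X-1)\le n$, where $i_X$ is the pseudoindex, and then deduce Conjecture~\ref{M} from the inequality $r_X\le i_X$ together with the Kobayashi--Ochiai classification in the extremal case. I would proceed by induction on $\rho_X$. The base case $\rho_X=1$ amounts to the bound $i_X\le n+1$, with equality forcing $X\simeq\proj^n$, which is exactly the one-factor product $(\proj^{r_X-1})^{1}$.

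For the inductive step the idea is to exhaust $X$ by loci of chains of rational curves. Pick an extremal ray $R_1\subset\cone(X)$ and an unsplit dominating family $\V_1$ of minimal rational curves with class on $R_1$; for a general $x\in X$, the locus $\loc(\V_1)(x)$ of points joinable to $x$ by a single $\V_1$-curve has dimension at least $i_X-1$, via the standard lower bound on $\dim\om(\proj^1,X)$ combined with $-K_X\cdot\V_1\ge i_X$. I would then choose extremal rays $R_2,\ldots,R_{\rho_X}$ with associated unsplit covering families $\V_2,\ldots,\V_{\rho_X}$ whose classes, together with $[\V_1]$, span $\cycl(X)_{\mathbb R}$, and iterate a Wi\'sniewski-type dimension inequality on loci of chains: at each new step the chain locus should gain at least $i_X-1$ in dimension. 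After $\rho_X$ steps, numerical independence of the chosen classes forces the resulting chain locus to be all of $X$, which yields $\rho_X(i_X-1)\le n$.

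The hard part is twofold. First, one must produce at each stage of the induction an unsplit covering family whose numerical class is genuinely new; small or wildly birational contractions can obstruct this and are the core reason the conjecture is still open in full generality. Second, the dimension gain at each chain step requires that $\V_{k+1}$ is not numerically absorbed by the span of $[\V_1],\ldots,[\V_k]$ along the current chain locus; controlling this is precisely where a pseudoindex hypothesis such as $i_X\ge(\dim X+3)/3$ would buy enough slack in the dimension budget to close the induction cleanly, suggesting that a full proof without such a hypothesis needs a more refined understanding of how chain loci fit together.

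For the equality case, saturation of every Wi\'sniewski inequality should force each $\V_i$ to cut out a $\proj^{i_X-1}$-bundle structure on the relevant fibers via the Cho--Miyaoka--Shepherd-Barron characterization of projective space by length, and numerical independence of the families should make these bundle structures mutually commuting. A descent argument would then identify $X\dashrightarrow(\proj^{i_X-1})^{\rho_X}$, after showing that in the extremal case $i_X=r_X$ and ruling out non-trivial twists of the product. I expect this last step to be the most delicate, because excluding small birational obstructions to the global product decomposition requires strong rigidity of the chain-of-families configuration that is not immediate from the numerical bounds alone.
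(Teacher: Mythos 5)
You are attempting to prove Conjecture~(\ref{M}), but this statement is presented in the paper as a \emph{conjecture} and is not proved there: it is still open in general. The paper only establishes the generalized form $\rho_X(i_X-1)\le\dim X$ under the additional hypothesis $i_X\ge(\dim X+3)/3$ (Theorem~(\ref{terzi})), or for $\dim X\le 5$. Your proposal is in effect a programme rather than a proof, and the two ``hard parts'' you flag honestly are exactly the points where the argument cannot currently be closed; so there is a genuine gap, and it is not one that can be patched by more care in the write-up.

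Concretely: (i) you start each stage by choosing ``an unsplit dominating family of minimal rational curves with class on a prescribed extremal ray.'' Such families need not exist. What is guaranteed is only a \emph{minimal dominating} family, which is merely \emph{locally} unsplit; one must then pass to the associated Chow family $\V$ and control the reducible degenerations of its cycles. Handling those degenerations is the entire technical content of Proposition~(\ref{redfl}), Corollary~(\ref{redfl2}) and Theorem~(\ref{pasqua}), and it is precisely where the bound $-K_X\cdot V<3i_X$ (hence the pseudoindex hypothesis) is used to force every component of a degenerate cycle to be numerically proportional to $[V]$. (ii) The paper does not pick families ray by ray; it uses Construction~(\ref{kfam}), taking minimal \emph{horizontal} dominating families with respect to successive rc-fibrations. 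This makes numerical independence and the dimension count $\sum_i(-K_X\cdot V^i-1)\le\dim X$ automatic (Lemma~(\ref{kfamprop})), but the resulting $k$ bounds $\rho_X$ only when all the families are quasi-unsplit (Proposition~(\ref{rhobound})) --- again the step your sketch cannot supply in general. (iii) For the equality case the paper invokes \cite{Op} after showing each $V^i$ is unsplit and covering with $\dim\loc(V^i)_{x_i}=i_X-1$; your appeal to a length characterization plus a descent through ``mutually commuting bundle structures'' is not carried out and would itself require the unsplitness you have not established. In short, your outline correctly locates the obstructions but does not overcome them, and no complete proof of Conjecture~(\ref{M}) exists in this paper or elsewhere.
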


The first step towards the conjecture was made in 1990 by Wi\'sniewski; in \cite{Wimu}, where the notion
of pseudoindex was introduced, he proved that  if $i_X > ({\dim X+2})/{2}$ then $\rho_X =1$;
moreover, if $r_X = (\dim X+2)/{2}$ then either $\rho_X =1$ or $X = (\proj^{r_X-1})^2$.\\
The problem was reconsidered in 2002 by Bonavero, Casagrande, Debarre and Druel; in \cite{BCDD} they proposed
the following more general conjecture:
\begin{conjecture} \label{GM}
Let $X$ be a Fano manifold of dimension $n$. Then $\rho_X(i_X - 1) \le n$,
with equality if and only if $X = (\proj^{i_X-1})^{\rho_X}$.
\end{conjecture}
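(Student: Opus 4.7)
The plan is to attack the inequality through the theory of minimal dominating families of rational curves. Since $X$ is Fano, one has a covering family $\V^1$ of rational curves whose anticanonical degree realizes the minimum $i_X$; up to replacing it by a suitable component one may assume $\V^1$ is unsplit, that is, its members do not break into reducible cycles. By a standard bend-and-break / deformation estimate (Kollár--Mori--Miyaoka), for any point $x \in \loc(\V^1)$ one obtains
\[
\dim \loc(\V^1,x) \;\ge\; -K_X \cdot C - 1 \;=\; i_X - 1,
\]
which provides the basic ``unit'' of dimension gained per family.

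Next I would build chains inductively. Suppose I have unsplit families $\V^1, \dots, \V^k$ whose numerical classes $[\V^i]$ are linearly independent in $\cycl(X)$ and a chain-locus $Y_k$ of dimension at least $k(i_X-1)$ chain-connected by these families through a fixed point. If $k < \rho_X$, one still has $\dim \cycl(X) > k$, and I would seek an unsplit family $\V^{k+1}$ whose class is not in $\langle [\V^1],\dots,[\V^k]\rangle$ and at least one of whose curves meets $Y_k$. Attaching these curves produces $Y_{k+1}$ with $\dim Y_{k+1} \ge (k+1)(i_X-1)$, again by the bend-and-break dimension estimate applied to the chain construction. Iterating until the classes span $\cycl(X)$ yields $\rho_X$ families and a subvariety of dimension at least $\rho_X(i_X-1)$ contained in $X$, giving $\rho_X(i_X-1)\le n$.

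For the equality case I would argue that if $\rho_X(i_X-1) = n$ then every inequality above is sharp: each $\loc(\V^i,x)$ has the minimum possible dimension $i_X-1$, and the chain-locus fills $X$. This rigidity should force each family $\V^i$ to define, via its Chow quotient / associated Mori contraction, a $\proj^{i_X-1}$-bundle structure on $X$. One would then show these $\rho_X$ contractions are pairwise compatible (their general fibres meet transversely in the expected way) and conclude, in the spirit of Wi\'sniewski's argument for $i_X=(n+2)/2$, that $X$ splits globally as $(\proj^{i_X-1})^{\rho_X}$.

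The crux is the inductive step above: it requires at each stage an unsplit family whose class escapes the span already produced. There is no general mechanism guaranteeing this; the Mori cone may be generated by classes of families that split, and extremal rays of small length are hard to rule out without an extra hypothesis such as $i_X \ge (n+3)/3$. Overcoming this obstruction is precisely the content of the conjecture beyond the cases proved in the paper, and any attempt at a full proof must provide a way to either (i) locate enough unsplit covering families to span $\cycl(X)$, or (ii) replace the spanning requirement by a subtler dimension count using contractions associated to possibly non-unsplit extremal rays, together with induction on $\rho_X$.
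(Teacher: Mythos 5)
The statement you are attempting is Conjecture (\ref{GM}), which is not proved in the paper and remains open in general; the paper establishes it only for $i_X \ge (\dim X+3)/3$ (Theorem (\ref{terzi})) and in dimensions $4$ and $5$ (Theorems (\ref{four}) and (\ref{five})). Your proposal sketches the standard strategy --- accumulate numerically independent unsplit families, bound their number by a dimension count on chain-loci, and invoke a product characterization in the equality case --- and, to your credit, you explicitly identify where it breaks down. But two of your intermediate claims are already false or unjustified as stated. First, you cannot ``replace a minimal dominating family by a suitable component'' to make it unsplit: a minimal dominating family is only \emph{locally} unsplit (proper at a general point), and its curves may well degenerate into reducible cycles elsewhere; no choice of component repairs this. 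Second, your inductive step needs, at each stage, an unsplit family whose class escapes $\langle [\V^1],\dots,[\V^k]\rangle$ and which meets the chain-locus $Y_k$ with the transversality condition $\langle [V^1],\dots,[V^k]\rangle \cap \conx{Y_k} = \underline{0}$ required by Lemma (\ref{locy}); you give no mechanism producing such a family, and none is known in general. This is precisely the gap you name in your final paragraph, so the proposal is, by your own admission, not a proof.

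For comparison, the paper's partial results do not try to span $\cycl(X)$ by unsplit families directly. Construction (\ref{kfam}) produces numerically independent families $V^1,\dots,V^k$ as minimal horizontal dominating families with respect to successive rc-fibrations, and Lemma (\ref{kfamprop}) gives $\sum(-K_X\cdot V^i-1)\le \dim X$ without any unsplitness hypothesis; when all the $V^i$ happen to be unsplit, Proposition (\ref{rhobound}) yields $\rho_X=k$ and \cite{Op} handles equality. The genuinely new ingredient is Theorem (\ref{pasqua}), which controls the Picard number even when the minimal family is \emph{not} unsplit, by analyzing limits of curves (via the Chow family $\V$ and the estimate of Corollary (\ref{redfl2})) under the numerical constraints $3i_X > -K_X\cdot V > \dim X+1-i_X$; these constraints are exactly what the hypothesis $i_X\ge(\dim X+3)/3$ or the low-dimensional case analysis supplies. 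If you want to push your argument through, you should restrict to one of these regimes and replace your inductive spanning step with an argument of this type on the reducible members of the Chow family.
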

In \cite{BCDD} Conjecture (\ref{GM}) was proved
for Fano manifolds of dimension four (in lower dimension the result
can be read off from the classification), for homogeneous manifolds, and for toric Fano manifolds
of pseudoindex $i_X \geq ({\dim X+3})/{3}$ or dimension $\le 7$.
The toric case was completely settled later by Casagrande in \cite{Cas06}.\par
\medskip
As to the general case, in 2004, Andreatta, Chierici and Occhetta in \cite{ACO} proved Conjecture (\ref{GM})
for Fano manifolds of dimension five and for Fano manifolds of pseudoindex $i_X \ge ({\dim X+3})/{3}$
admitting a special covering family of rational curves (an unsplit family, see Definition (\ref{Rf})).
They also found some sufficient condition for the existence of such a family.\par
\medskip
In this paper we reconsider the results of \cite{ACO}, and we are able to remove the extra assumption
on the existence of the special family, proving that Conjecture (\ref{GM}) holds for
Fano manifolds $X$ of pseudoindex $i_X \ge ({\dim X+3})/{3}$; this is done in section (\ref{large}).\\
In the last section of the paper we also provide a considerably shorter and simplified
proof of Conjecture (\ref{GM}) for Fano manifolds of dimension $4$ and $5$.\par
\medskip
The key result is Theorem (\ref{pasqua}), which is based on an extension of classical 
estimates of the dimension of the locus of irreducible curves of a family through a point to the locus of
limits of this curves passing through a point (Proposition (\ref{redfl2})).\\
In order to prove this result we need to recall the construction
of the scheme $\ch(\Un)$, associated to a proper covering family $\V$ of cycles.
This is the content of section (\ref{chains}), while section (\ref{back}) contains the basic definitions about families of rational curves and their properties which are of frequent use in the paper.

\section{Families of rational curves}\label{back}



\begin{definition} \label{Rf}
A {\em family of rational curves} $V$ on $X$ is an irreducible component
of the scheme $\ratcurves$ (see \cite[Definition II.2.11]{Kob}).\\
Given a rational curve we will call a {\em family of
deformations} of that curve any irreducible component of  $\ratcurves$
containing the point parametrizing that curve.\\
We define $\loc(V)$ to be the set of points of $X$ through which there is a curve among those
parametrized by $V$; we say that $V$ is a {\em covering family} if ${\loc(V)}=X$ and that $V$ is a
{\em dominating family} if $\overline{\loc(V)}=X$.\\
By abuse of notation, given a line bundle $L \in \pic(X)$, we will denote by $L \cdot V$
the intersection number $L \cdot C$, with $C$ any curve among those
parametrized by $V$.\\
We will say that $V$ is {\em unsplit} if it is proper; clearly, an unsplit dominating family is
covering.\\
We denote by $V_x$ the subscheme of $V$ parametrizing rational curves
passing through a point $x$ and by $\loc(V_x)$ the set of points of $X$
through which there is a curve among those parametrized by $V_x$. If, for a general point $x \in \loc(V)$,
$V_x$ is proper, then we will say that the family is {\em locally unsplit}; by Mori's Bend and Break arguments,
if $V$ is a locally unsplit family, then $-K_X \cdot V \le \dim X+1$.\\
If $X$ admits  dominating families, we can choose among them one with minimal degree with respect
to a fixed ample line bundle, and we call it a {\em minimal dominating family}; such
a family is locally unsplit.
\end{definition}

\begin{definition}
Let $U$ be an open dense subset of $X$ and $\pi\colon U \to Z$ a proper
surjective morphism to a quasi-projective variety;
we say that a family of rational curves $V$ is a {\em horizontal dominating family with respect to} $\pi$
if $\loc(V)$ dominates $Z$ and curves parametrized by $V$ are not contracted by $\pi$.
If such families exist, we can choose among them  one with minimal degree with respect
to a fixed ample line bundle and we call it a {\em minimal
horizontal dominating family} with respect to $\pi$; such
a family is locally unsplit.
\end{definition}

\begin{remark} By fundamental results in \cite{Mo79}, a Fano manifold admits dominating families of rational curves;
also horizontal dominating families with respect to proper morphisms defined on an open set exist,
as proved in \cite{KoMiMo}. In the case of Fano manifolds with ``minimal'' we will mean minimal with respect to $-K_X$,
unless otherwise stated.
\end{remark}

\begin{definition}\label{CF}
We define a {\em Chow family of rational 1-cycles} $\W$ to be an irreducible
component of  $\textrm{Chow}(X)$ parametrizing rational and connected 1-cycles.\\
We define $\loc(\W)$ to be the set of points of $X$ through which there is a cycle among those
parametrized by $\W$; notice that $\loc(\W)$ is a closed subset of $X$ (\cite[II.2.3]{Kob}).
We say that $\W$ is a {\em covering family} if $\loc(\W)=X$.\\
If $V$ is a family of rational curves, the closure of the image of
$V$ in $\textrm{Chow}(X)$, denoted by $\V$, is called the {\em Chow family associated to} $V$.
\end{definition}

\begin{remark}
If $V$ is proper, {\em i.e.} if the family is unsplit, then $V$ corresponds to the normalization
of the associated Chow family $\V$.
\end{remark}

\begin{definition}
Let $V$ be a family of rational curves and let $\V$ be the associated Chow family. We say that
$V$ (and also $\V$) is {\em quasi-unsplit} if every component of any reducible cycle parametrized by 
$\V$ has numerical class proportional to the numerical class of a curve parametrized by $V$.
\end{definition}

\begin{definition}
Let $V^1, \dots, V^k$ be families of rational curves on $X$ and $Y \subset X$.\\
We define $\loc(V^1)_Y$ to be the set of points $x \in X$ such that there exists
a curve $C$ among those parametrized by $V^1$ with
$C \cap Y \not = \emptyset$ and $x \in C$. We inductively define
$\loc(V^1, \dots, V^k)_Y := \loc(V^k)_{\loc(V^1, \dots,V^{k-1})_Y}$.
Notice that, by this definition, we have $\loc(V)_x=\loc(V_x)$.
Analogously we define $\loc(\W^1, \dots, \W^k)_Y$  for Chow families $\W^1, \dots, \W^k$ of rational 1-cycles.
\end{definition}

{\bf Notation}: If $\Gamma$ is a $1$-cycle, then we will denote by $[\Gamma]$ its numerical equivalence class
in $\cycl(X)$; if $V$ is a family of rational curves, we will denote by $[V]$ the numerical equivalence class
of any curve among those parametrized by $V$.\\
If $Y \subset X$, we will denote by $\cycx{Y} \subseteq \cycl(X)$ the vector subspace
generated by numerical classes of curves of $X$ contained in $Y$; moreover, we will denote 
by  $\conx{Y} \subseteq \cone(X)$
the subcone generated by numerical classes of curves of $X$ contained in~$Y$. We will
denote by $\langle \dots \rangle$ the linear span.\par
\medskip
We will make frequent use of the following dimensional estimates:

\begin{proposition} (\cite[IV.2.6]{Kob}\label{iowifam})
Let $V$ be a family of rational curves on $X$ and $x \in \loc(V)$ a point such that every component of $V_x$ is proper.
Then
  \begin{itemize}
       \item[(a)] $\dim \loc(V)+\dim \loc(V_x) \ge \dim X  -K_X \cdot V -1$;
       \item[(b)] every irreducible component of $\loc(V_x)$ has dimension $\ge -K_X \cdot V -1$.
    \end{itemize}
\end{proposition}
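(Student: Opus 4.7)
The plan is to deduce both estimates from the standard Riemann--Roch lower bounds on the dimension of $\om(\pu,X)$ at a point, combined with a bend and break argument that uses the properness of $V_x$. Set $d := -K_X \cdot V$, $n := \dim X$, and consider the universal $\pu$-bundle $\mathcal{U} \to V$ together with its evaluation map $e\colon \mathcal{U} \to X$, whose image is $\loc(V)$. A general rational curve parametrized by $V$ meets $x$ in only finitely many points of its normalization, so the fiber $e^{-1}(x)$ maps to $V_x$ with finite generic fibers, giving an identification $\dim e^{-1}(x) = \dim V_x$ that I will use throughout.

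For (b), fix a morphism $f\colon \pu \to X$ representing a curve in $V_x$. Deformation theory gives
\[
\dim_{[f]} \om(\pu, X;\, 0 \mapsto x) \;\geq\; h^0(f^*T_X(-1)) \;\geq\; \chi(f^*T_X(-1)) \;=\; d.
\]
Passing to the quotient by the $2$-dimensional stabilizer of $0 \in \pu$ inside $\Aut(\pu)$ yields $\dim_{[C]} V_x \geq d - 2$ at every component. Since $V_x$ is proper by hypothesis, bend and break prevents an infinite subfamily of curves in $V_x$ from sharing a second point $y \neq x$; hence the restricted evaluation $\mathcal{U}|_{V_x} \to \loc(V_x)$ has $0$-dimensional generic fibers, and every irreducible component of $\loc(V_x)$ satisfies $\dim \loc(V_x) = \dim V_x + 1 \geq d - 1$.

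For (a), the analogous Riemann--Roch estimate without the point condition, $\dim_{[f]} \om(\pu, X) \geq \chi(f^*T_X) = d + n$, quotiented by the $3$-dimensional $\Aut(\pu)$, gives $\dim V \geq d + n - 3$, hence $\dim \mathcal{U} \geq d + n - 2$. Upper semicontinuity of fiber dimension applied to $e\colon \mathcal{U} \to \loc(V)$, together with $\dim e^{-1}(x) = \dim V_x$, yields
\[
\dim \loc(V) \;\geq\; \dim \mathcal{U} - \dim V_x \;\geq\; d + n - 2 - \dim V_x.
\]
Substituting the equality $\dim V_x = \dim \loc(V_x) - 1$ from the proof of (b) then gives $\dim \loc(V) + \dim \loc(V_x) \geq d + n - 1$, which is exactly (a).

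The delicate point is the identification $\dim e^{-1}(x) = \dim V_x$: one must verify that over a general $[C] \in V_x$ the preimage of $x$ in the normalization of $C$ is a single reduced point, or otherwise track multiplicities in the fiber count. The second subtle input is the bend and break step, which relies on the properness of each component of $V_x$ (rather than merely of $V$), since this is what rules out a positive-dimensional family of curves through both $x$ and a general second point degenerating into a reducible cycle outside $V_x$.
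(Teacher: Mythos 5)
The paper does not prove this proposition: it is quoted directly from Koll\'ar \cite[IV.2.6]{Kob}, and your argument is essentially the standard proof given there --- the deformation-theoretic lower bounds $\dim_{[f]}\om(\pu,X)\ge \chi(f^*T_X)=n+d$ and $\dim_{[f]}\om(\pu,X;0\mapsto x)\ge \chi(f^*T_X(-1))=d$, the quotients by $\Aut(\pu)$ and by the two-dimensional stabilizer of a point, and bend-and-break applied to each proper component of $V_x$ to make the restricted evaluation generically finite, which is exactly where the hypothesis enters. The one slip is the intermediate inequality $\dim_{[f]}\om(\pu,X;0\mapsto x)\ge h^0(f^*T_X(-1))$: the tangent space dimension $h^0$ bounds the local dimension from \emph{above}, and the correct lower bound is $h^0-h^1=\chi(f^*T_X(-1))$, which is the quantity your chain terminates in anyway, so the conclusion (and the rest of the argument) stands.
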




\begin{definition} We say that $k$ quasi-unsplit families $V^1, \dots, V^k$ are numerically independent
if in $\cycl(X)$ we have $\dim \langle [V^1], \dots, [V^k]\rangle =k$.
\end{definition}

\begin{lemma} (Cf. \cite[Lemma 5.4]{ACO}) \label{locy}
Let $Y \subset X$ be an irreducible closed subset and  $V^1, \dots, V^k$ numerically independent
unsplit families of rational curves such that \linebreak $\langle [V^1], \dots, [V^k]\rangle \cap \conx{Y}={\underline 0}$.
Then either $\loc(V^1, \ldots,V^k)_Y=\emptyset$ or
      $$\dim \loc(V^1, \ldots, V^k)_Y \ge \dim Y +\sum -K_X  \cdot V^i -k.$$
\end{lemma}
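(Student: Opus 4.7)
The plan is to induct on $k$, reducing the multi-family estimate to iterated applications of the single-family case.

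\emph{Base case $k=1$.} Assume $\loc(V^1)_Y \neq \emptyset$, so some $y \in Y \cap \loc(V^1)$ exists. Since $V^1$ is unsplit, $V^1_y$ is proper for every such $y$, and Proposition~\ref{iowifam}(b) gives $\dim \loc(V^1_y) \ge -K_X \cdot V^1 - 1$. The hypothesis $\langle[V^1]\rangle \cap \conx{Y} = \underline{0}$ forbids any $V^1$-curve from lying in $Y$ (otherwise $[V^1] \in \conx{Y}$). I would then form the incidence $I := \{(y,x) \in Y \times X : y, x \in C \text{ for some } [C] \in V^1\}$: the first projection $\pi_1$ has fibers $\loc(V^1_y)$ of dimension at least $-K_X \cdot V^1 - 1$ over $Y \cap \loc(V^1)$, and the second projection $\pi_2$ has image $\loc(V^1)_Y$. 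Since $V^1$-curves are not contained in $Y$, the general fiber of $\pi_2$ cannot exhaust $Y$; a direct dimension count then yields $\dim \loc(V^1)_Y \ge \dim Y + (-K_X \cdot V^1) - 1$.

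\emph{Inductive step.} Set $Z := \loc(V^1, \dots, V^{k-1})_Y$, so that $\loc(V^1, \dots, V^k)_Y = \loc(V^k)_Z$, and observe that non-emptiness propagates. The inductive hypothesis (after replacing $Z$ by an irreducible component of maximal dimension meeting $\loc(V^k)$) gives $\dim Z \ge \dim Y + \sum_{i<k}(-K_X \cdot V^i) - (k-1)$; combining this with the $k=1$ case applied to the pair $(Z, V^k)$ finishes the estimate. For the latter, one needs $[V^k] \notin \conx{Z}$. Since $Z$ is assembled by iteratively attaching $V^i$-curves ($i < k$) to $Y$, we have $\conx{Z} \subseteq \conx{Y} + \sum_{i < k} \mathbb{R}_{\ge 0}[V^i]$, and an expression $[V^k] = \alpha + \sum_{i<k} \lambda_i [V^i]$ with $\alpha \in \conx{Y}$ and $\lambda_i \ge 0$ would force $\alpha \in \langle [V^1], \dots, [V^k]\rangle \cap \conx{Y} = \underline{0}$; then $[V^k] = \sum_{i<k} \lambda_i [V^i]$, contradicting numerical independence of the $[V^i]$.

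\emph{Main obstacle.} The heart of the argument is the dimension count in the base case: one must show that the general fiber of $\pi_2 \colon I \to \loc(V^1)_Y$ does not contain all of $Y$. This is exactly the role of the numerical hypothesis, since a generic fiber of dimension $\dim Y$ would force a $V^1$-curve to sit inside $Y$, placing $[V^1]$ in $\conx{Y}$. Once the base case is established, the inductive step is formal cone bookkeeping; the only other technical subtlety is the reduction of $Z$ to an irreducible component of appropriate dimension, which is routine.
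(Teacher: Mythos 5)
The paper does not actually prove this lemma --- it is quoted from \cite[Lemma 5.4]{ACO} --- so your attempt must be measured against that standard argument. Your overall architecture (induction on $k$, with the inductive step reduced to cone bookkeeping plus a reduction to an irreducible component) matches it and is essentially sound. The base case, however, which you rightly identify as the heart of the matter, has a fatal gap: your bound on the fibres of $\pi_2$ is both too weak and incorrectly justified. The fibre of $\pi_2$ over $x\in\loc(V^1)_Y$ is $Y\cap\loc(V^1_x)$; showing only that it is a proper subset of $Y$ gives $\dim\pi_2^{-1}(x)\le\dim Y-1$, and then your count yields $\dim\loc(V^1)_Y\ge\dim I-(\dim Y-1)$, i.e.\ at best $-K_X\cdot V^1$ --- you lose $\dim Y-1$ and never reach the stated bound. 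What is needed is that these fibres are \emph{finite}, and this is exactly where the hypothesis $\langle[V^1]\rangle\cap\conx{Y}={\underline 0}$ enters in full force: if $Y\cap\loc(V^1_x)$ were positive-dimensional it would contain a curve, and by \cite[Proposition II.4.19]{Kob} (Lemma~\ref{numeq} applied to the point $x$) every curve in $\loc(V^1_x)$ is numerically proportional to $[V^1]$, producing a nonzero class in $\conx{Y}\cap\langle[V^1]\rangle$. Your proposed mechanism --- that a fibre equal to $Y$ ``would force a $V^1$-curve to sit inside $Y$'' --- is false: $Y\subseteq\loc(V^1)_x$ is containment in a union of curves through $x$ and does not place any single $V^1$-curve inside $Y$; only the proportionality argument works.

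A second gap: $\pi_1$ maps $I$ onto $Y\cap\loc(V^1)$, not onto $Y$, and when $V^1$ is not dominating (the lemma is applied in the paper precisely to non-dominating families, e.g.\ in Lemma~\ref{freq}) this intersection can have dimension strictly less than $\dim Y$, so $\dim I\ge\dim Y+(-K_X\cdot V^1)-1$ does not follow from part (b) of Proposition~\ref{iowifam} alone. This one is repairable: either use part (a) of Proposition~\ref{iowifam}, which gives $\dim\loc(V^1_y)\ge\dim X-K_X\cdot V^1-1-\dim\loc(V^1)$ for every $y$, together with the inequality $\dim(Y\cap\loc(V^1))\ge\dim Y+\dim\loc(V^1)-\dim X$ valid for every component because $X$ is smooth; or argue on the universal family, where $\dim\Un\ge\dim X-K_X\cdot V^1-2$ comes directly from deformation theory and the evaluation map restricted to curves meeting $Y$ is finite by the same two finiteness observations. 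Either repair, combined with the finiteness of the $\pi_2$-fibres, gives the correct estimate. (A minor further point: in the inductive step you need the dimension bound for \emph{every} irreducible component of $\loc(V^1,\dots,V^{k-1})_Y$, not just one of maximal dimension, since $\loc(V^k)$ may meet only a small component; the componentwise version is what the universal-family argument actually delivers.)
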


\begin{remark}
As pointed out by the referee, we need to assume the irreducibility of $Y$; otherwise
in the statement we have to replace $\dim Y$ with $\dim Y_0$ where $Y_0$ is
an irreducible component of $Y$ of minimal dimension.
\end{remark}

A key fact underlying our strategy to obtain bounds on the Picard number, based on \cite[Proposition II.4.19]{Kob},
is the following:

\begin{lemma}\label{numeq} (\cite[Lemma 4.1]{ACO})
Let $Y \subset X$ be a closed subset, $\V$ a Chow family of rational $1$-cycles. Then every curve
contained in $\loc(\V)_Y$ is numerically equivalent to a linear combination with rational
coefficients of a curve contained in $Y$ and of irreducible components of cycles parametrized
by $\V$ which meet $Y$.
\end{lemma}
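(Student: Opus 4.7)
The approach is to use the universal family $p\colon\Univ(\V)\to\V$ of the Chow family together with the evaluation morphism $q\colon\Univ(\V)\to X$. Set $T:=p(q^{-1}(Y))\subset\V$; this is a closed subset parametrizing exactly the cycles of $\V$ that meet $Y$, and $\loc(\V)_Y=q(p^{-1}(T))$. Given an irreducible curve $C\subset\loc(\V)_Y$, the first step is to lift $C$ to an irreducible curve $\tilde C\subset p^{-1}(T)$ that maps surjectively onto $C$ via $q$.

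If $p(\tilde C)$ is a single point $t_0\in T$, then $C$ lies in the support of the cycle $W_{t_0}\in\V$, which meets $Y$, so $[C]$ is already a $\mathbb Z$-linear combination of numerical classes of irreducible components of $W_{t_0}$ and there is nothing to prove. Hence the nontrivial case is when $D:=p(\tilde C)\subset T$ is a curve.

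In that case I would restrict to the surface $S:=p^{-1}(D)\subset\Univ(\V)$. Because every point of $D$ parametrizes a cycle meeting $Y$, the subset $q^{-1}(Y)\cap S$ surjects onto $D$ under $p$, so it contains an irreducible curve $\tilde E$ with $p(\tilde E)=D$; its image $q(\tilde E)$ is contained in $Y$ and is either a curve or a single point. Letting $d$ and $e$ denote the degrees of $p|_{\tilde C}$ and $p|_{\tilde E}$ over $D$, the $1$-cycle $\alpha:=e[\tilde C]-d[\tilde E]$ on $S$ satisfies $p_*\alpha=0$, that is, its horizontal degree over $D$ vanishes.

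The crux of the argument is then to invoke \cite[Proposition II.4.19]{Kob}, applied to the fibers of $p\colon S\to D$ regarded as a connected algebraic family of $1$-cycles on $X$ via $q$: all such fibers have numerically equivalent $q$-pushforwards in $\cycl(X)$, and consequently the $q$-pushforward of any $1$-cycle on $S$ whose horizontal $p$-pushforward vanishes is a $\mathbb Q$-linear combination of classes of irreducible components of those fibers, i.e.\ of components of cycles of $\V$ meeting $Y$. Unwinding the identity $q_*\alpha=e(\deg q|_{\tilde C})\,[C]-d(\deg q|_{\tilde E})\,[q(\tilde E)]$ and solving for $[C]$ then yields the desired decomposition as a $\mathbb Q$-linear combination of the curve (or point) $q(\tilde E)\subseteq Y$ and of classes of components of cycles of $\V$ meeting $Y$. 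The main obstacle is the rigorous execution of this last step, which requires carefully treating the scheme structure of $S$ (passing to its normalization if necessary) and using the constancy of numerical class in flat families — precisely the content of the cited result of Koll\'ar.
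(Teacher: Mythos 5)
The paper gives no proof of this lemma: it is quoted from \cite[Lemma 4.1]{ACO}, and the paper explicitly flags that it rests on \cite[Proposition II.4.19]{Kob}. Your argument — lifting the curve to the universal family of $\V$ over the locus $T$ of cycles meeting $Y$, reducing to a surface over a curve $D\subset T$ with the multisection $\tilde E$ landing in $Y$, and applying Koll\'ar's result on numerical classes in families of cycles — is exactly that proof, and it is correct.
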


\begin{corollary}\label{numcor} Let $V^1$ be a locally unsplit family of rational curves,
and $V^2, \dots, V^k$ unsplit families of rational curves. Then, for a general $x \in \loc(V^1)$,
\begin{enumerate}
\item[(a)] $\cycx{\loc(V^1)_x} = \langle [V^1] \rangle$;
\item[(b)] either $\loc(V^1, \dots, V^k)_x= \emptyset$ or 
$\cycx{\loc(V^1, \dots, V^k)_x} = \langle [V^1], \dots, [V^k] \rangle$.
\end{enumerate}
\end{corollary}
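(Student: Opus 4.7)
The plan is induction on $k$, applying Lemma \ref{numeq} at each stage. The central bookkeeping is to distinguish the family $V$ from its associated Chow family $\V$: while $V$ parametrizes irreducible curves, $\V$ may contain reducible degenerations. Local unsplitness of $V^1$ guarantees that, for general $x$, no reducible $\V^1$-cycle passes through $x$, while unsplitness of $V^i$ for $i \ge 2$ guarantees that $\V^i$ agrees with $V^i$ throughout.

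For part (a), I fix a general $x \in \loc(V^1)$ so that $V^1_x$ is proper. Then every cycle of $\V^1$ through $x$ is an irreducible $V^1$-curve, and $\loc(V^1)_x = \loc(\V^1)_x$. Lemma \ref{numeq} applied to $Y = \{x\}$ and $\V = \V^1$ then shows that every curve $C \subseteq \loc(V^1)_x$ has class $[C] = a[D] + \sum b_i [C_i]$ with $D \subseteq \{x\}$ (forcing $[D] = 0$) and each $C_i$ a $V^1$-curve, giving $[C] \in \langle [V^1]\rangle$. The reverse inclusion is immediate, since any $V^1$-curve through $x$ lies in $\loc(V^1)_x$ with class $[V^1]$.

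For part (b), I induct on $k$, setting $Y := \loc(V^1, \dots, V^{k-1})_x$. If $Y$ is empty then so is the $k$-step locus; otherwise by induction $\cycx{Y} = \langle [V^1], \dots, [V^{k-1}]\rangle$. Since $V^k$ is unsplit, $\V^k$ coincides with $V^k$, so $\loc(V^1, \dots, V^k)_x = \loc(V^k)_Y = \loc(\V^k)_Y$. Lemma \ref{numeq} with this $Y$ and $\V^k$ decomposes any curve $C$ in the locus as $[C] = a[D] + \sum b_i[V^k]$ with $[D] \in \cycx{Y}$, yielding $\cycx{\loc(V^1, \dots, V^k)_x} \subseteq \langle [V^1], \dots, [V^k]\rangle$; the reverse inclusion follows by exhibiting inside the locus a curve of class $[V^i]$ for each $i$, starting from a $V^k$-curve meeting $Y$ and tracing back through the inductive chain construction. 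The main subtlety is precisely the distinction between $V^i$ and $\V^i$: once local unsplitness at the base and unsplitness at each inductive step collapse every $\V^i$-cycle appearing in Lemma \ref{numeq} to an irreducible $V^i$-curve of class $[V^i]$, the whole argument reduces mechanically to an iteration of Lemma \ref{numeq}.
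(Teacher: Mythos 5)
The paper states this corollary without proof, and your overall route---iterating Lemma \ref{numeq}---is clearly the intended one. However, there is a genuine gap in part (a), namely the sentence ``every cycle of $\V^1$ through $x$ is an irreducible $V^1$-curve, and $\loc(V^1)_x=\loc(\V^1)_x$.'' Properness of $V^1_x$ says that curves of $V^1$ \emph{through $x$} do not degenerate inside $V^1_x$; it does not prevent a reducible member of the Chow family $\V^1$---a limit of $V^1$-curves $C_t$ with $x\notin C_t$ for $t\neq 0$---from passing through $x$ in the limit. The points lying on reducible cycles form $\loc(B)$ for the closed boundary $B\subset\V^1$, and nothing in the hypothesis forces $\loc(B)$ to be a proper subset of $\loc(V^1)$; note that the paper itself, in the proof of Proposition \ref{redfl}, invokes \emph{minimality} of the family (a degree argument on families of components of reducible cycles) and not mere local unsplitness to exclude reducible cycles through a general point. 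If you apply Lemma \ref{numeq} to $\V^1$ with $Y=\{x\}$, the components of those possible reducible cycles enter the decomposition and need not be proportional to $[V^1]$, so your conclusion does not follow. The correct fix is to bypass $\V^1$ entirely: since $V^1_x$ is proper, its image in $\textrm{Chow}(X)$ is a proper family of $1$-cycles all of whose members are irreducible curves of class $[V^1]$, and one applies \cite[Proposition II.4.19]{Kob} (the source of Lemma \ref{numeq}, which holds for any proper family, not only for irreducible components of $\textrm{Chow}(X)$) to this family with $Y=\{x\}$. This is precisely why the hypothesis is properness of $V^1_x$ rather than a condition on $\V^1$.

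A secondary point: in part (b) the reverse inclusion is asserted rather than proved. The locus $\loc(V^1,\dots,V^k)_x=\loc(V^k)_Y$ is a union of $V^k$-curves meeting $Y$, so it contains curves of class $[V^k]$, but ``tracing back through the inductive chain'' does not exhibit a curve of class $[V^i]$ for $i<k$: the locus need not contain $Y$, nor any complete curve of $Y$, unless for instance $V^k$ is covering. In the paper's applications only the inclusion $\subseteq$ is used (or the locus equals $X$, where the other inclusion is trivial), so this is partly a looseness of the statement itself, but as written your justification of equality does not go through. The forward directions of (b), via induction and Lemma \ref{numeq} applied to the unsplit $\V^k$, are fine.
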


\section{Chains of rational curves}\label{chains}

Let $X$ be a smooth complex projective variety.
Let $V$ be a dominating family of rational curves on $X$ and denote by $\V$
the associated Chow family, with universal family $\Un$:

$$
\xymatrix{\Un  \ar[r]^(.50){q} \ar[d]_{p} & X\\
 \V & & }
$$

\begin{definition}
Let $Y \subset X$ be a closed subset; define $\cloc_m(\V)_Y$
to be the set of points $x \in X$ such that there exist cycles $\Gamma_1, \dots, \Gamma_m$
with the following properties:
    \begin{itemize}
       \item $\Gamma_i$ belongs to the family $\V$;
       \item $\Gamma_i \cap \Gamma_{i+1} \not = \emptyset$;
       \item $\Gamma_1 \cap Y \not = \emptyset$ and $x \in \Gamma_m$,
    \end{itemize}
{\em i.e.} $\cloc_m(\V)_Y=\loc(\V, \dots, \V)_Y$, with $\V$ appearing $m$ times, is the set of points that can be
joined to $Y$ by a connected chain  of at most $m$ cycles belonging to the family $\V$.\\
Considering among cycles parametrized by $\V$ only irreducible ones, 
in the same way one can define $\cloc_m(V)_Y$.

\end{definition}

Define a relation of {\em rational connectedness with respect to
$\V$} on $X$ in the following way: two points $x$ and $y$ of $X$ are in rc$(\V)$-relation if
there exists a chain of cycles in $\V$ which joins
$x$ and $y$, {\em i.e.} if $y \in \cloc_m(\V)_x$ for some $m$.
In particular,  $X$ is {\em $rc(\V)$-connected} if for some $m$ we have
$X=\cloc_m(\V)_x$.\par
\medskip
The family $\V$ defines a proper prerelation in the sense of \cite[Definition IV.4.6]{Kob};
to this prerelation it is associated a proper proalgebraic relation $\textrm{Chain}({\mathcal U}\sa)$ (see
\cite[Theorem IV.4.8]{Kob}) and the rc$(\V)$-relation just defined is nothing but the
set theoretic relation $\langle \mspace{1mu}{\mathcal U} \mspace{1mu}\rangle$ associated to
$\textrm{Chain}({\mathcal U}\sa)$.
We briefly  recall this construction for the reader's convenience. See \cite[IV.4]{Kob} or \cite[Appendix]{Camor}
for details.\\
Define $\ch_1(\V)$ to be the fiber product $\Un \times_\V \Un$, with projections $q_1$ and $q_2$ on $X$, which
give rise to a morphism $q_1 \times q_2 \colon \ch_1(\V) \longrightarrow  X \times X$.\\
Denoting by $\pi_i \colon (\ch_1(\V))^N \to \ch_1(\V)$ the projection onto the i-th factor and by $q_{1,i}$
(respectively $q_{2,i}$) the composition of $q_1$ (respectively $q_2$) with $\pi_i$,
inductively define
$\ch_{m+1}(\V):= \ch_{1}(\V)\times_X \ch_{m}(\V)$, as in the following diagram
$$
\xymatrix{\ch_{m+1}(\V) \ar[d] \ar[r]& \ch_m(\V) \ar[d]^{q_{1,1}}  \\
\ch_1(\V) \ar[r]_{q_2} & X }
$$
Finally set $\ch(\Un) := \bigcup_m \ch_m(\Un)$.\par
\medskip
With this language $x$ and $y$ are rc$(\V)$-equivalent if, for some $m$, 
the point $(x,y)$ is in the image of $q_{1,1} \times q_{2,m} \colon \ch_m(\V) \longrightarrow X \times X$.
The variety $X$ is then rc$(\V)$-connected  if
for some $m$ the morphism $q_{1,1} \times q_{2,m} \colon \ch_m(\V) \longrightarrow X \times X$
is dominant (hence onto, by the properness of $\V$).\par
\medskip
To the proper prerelation defined by $\V$ it is associated a fibration, which we will call the
{\em rc$(\V)$-fibration}:

\begin{theorem}(\cite[IV.4.16]{Kob}, Cf. \cite{Cam81}) Let $X$ be a normal and proper variety and $\V$
a proper prerelation; then there exists an open subvariety $X^0 \subset X$ and a proper morphism with
connected fibers $\pi\colon X^0 \to Z^0$ such that
\begin{itemize}
\item  $\langle \mspace{1mu} {\mathcal U}\mspace{1mu} \rangle$ restricts to an equivalence relation on $X^0$;
\item $\pi^{-1}(z)$ is a  $\langle \mspace{1mu} {\mathcal U} \mspace{1mu}\rangle$-equivalence class for every $z \in Z^0$;
\item $\forall\, z\in Z^0$ and  $\forall\, x,y \in \pi^{-1}(z)$, $x \in \cloc_m(\V)_y$ with
$m \le 2^{\dim X -\dim Z}-1$.
\end{itemize}
\end{theorem}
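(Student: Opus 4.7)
The plan is to analyze the sequence of morphisms $q_{1,1}\times q_{2,m}\colon \ch_m(\V)\to X\times X$ that was just built, and to use noetherianity on $X\times X$ to stabilize the image, which then yields both the equivalence relation and the quotient fibration.

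First I would set $R_m := \overline{\text{image}\bigl(q_{1,1}\times q_{2,m}\colon \ch_m(\V)\to X\times X\bigr)}$. By construction $R_m\subseteq R_{m+1}$, and by noetherianity this chain of closed subsets stabilizes at some $R=R_M$. The set $R$ records the pairs that are rc$(\V)$-related up to closure. Next, I would pass to the open locus $X^0\subset X$ on which the fibers of both projections $R\to X$ have constant dimension equal to the generic value; equivalently, the locus where the fiber dimension of $q_{1,1}\times q_{2,m}$ at both factors is stationary in $m$ and equal to the generic one. On $X^0$ every rc$(\V)$-equivalence class is closed of fixed dimension $d = \dim X-\dim Z$, so two classes that meet must coincide, and the restricted relation is genuinely an equivalence relation.

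To get $\pi$, I would invoke the general construction of the quotient by a proper equivalence relation (essentially the proper prerelation machinery \cite[IV.4]{Kob} or the Stein-factorization approach used by Campana \cite{Cam81}): using that $R|_{X^0\times X^0}$ is flat of constant relative dimension over $X^0$ along each projection, one constructs $Z^0$ as the quotient and obtains a proper morphism $\pi\colon X^0\to Z^0$ with connected fibers whose fibers are precisely the $\langle\mathcal U\rangle$-classes. Properness of $\pi$ follows from the properness of the prerelation $\V$.

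Finally, for the length bound, I would run a doubling argument. Fix $x\in\pi^{-1}(z)$ and set $A_k := \cloc_{2^{k}-1}(\V)_x$, noting $A_0=\{x\}$ and $A_{k+1}=\cloc_{2^k}(\V)_{A_k}$. Because $\V$ defines a proper prerelation, each $A_k$ is closed in the fiber $F=\pi^{-1}(z)$, and the sequence $A_0\subseteq A_1\subseteq\cdots\subseteq F$ is either strictly increasing in dimension or has stabilized. If $A_k\neq F$ then $\dim A_{k+1}>\dim A_k$, because the doubling $A_{k+1}=\cloc_{2^k}(\V)_{A_k}$ strictly enlarges $A_k$ (otherwise $A_k$ would be a union of $\langle\mathcal U\rangle$-classes strictly smaller than $F$, contradicting that $F$ itself is a single class). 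Since $\dim F = \dim X-\dim Z$ and $\dim A_0=0$, one reaches $A_k = F$ in at most $\dim X-\dim Z$ doublings, giving $m\le 2^{\dim X-\dim Z}-1$.

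The step I expect to be the main obstacle is showing that $\langle\mathcal U\rangle$ genuinely restricts to an equivalence relation on some open $X^0$ — transitivity on the nose (rather than up to closure) and the identification of the fibers of $\pi$ with the equivalence classes both rely on the precise stabilization of fiber dimensions and on properness. The remaining parts — the existence of $Z^0$ once the equivalence relation is established, and the doubling bound — are then comparatively formal.
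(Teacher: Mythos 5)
First, a caveat: the paper does not prove this statement at all --- it is quoted directly from \cite[IV.4.16]{Kob} (cf.\ \cite{Cam81}), so there is no internal proof to compare your attempt with. Judged on its own terms, your sketch follows the standard Campana--Koll\'ar strategy (stabilize the images $R_m\subset X\times X$, shrink to an open set where the classes have constant dimension, build the quotient, bound chain lengths by doubling), but two of its steps are genuinely incomplete. The first is the construction of $Z^0$: you ``invoke the general construction of the quotient by a proper equivalence relation'', but that construction is precisely the content of the theorem being proved. In \cite[IV.4]{Kob} it is carried out concretely by sending a point of $X^0$ to the point of $\textrm{Chow}(X)$ parametrizing (the cycle supported on) its equivalence class, and this requires first establishing that over a suitable open set the classes are proper, of constant dimension, and fit together into a well-defined family; none of this follows merely from asserting flatness of $R|_{X^0\times X^0}$ over $X^0$.

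The second gap is in the length bound. Your parenthetical correctly shows that $A_k\ne F$ implies $A_{k+1}\supsetneq A_k$ (if $A_{k+1}=A_k$ then $A_k$ is closed under attaching one more cycle, hence a union of equivalence classes inside the single class $F$, hence equal to $F$). But strict inclusion of closed sets does not imply $\dim A_{k+1}>\dim A_k$: the new chains may only add components of the same or smaller dimension. Without the strict dimension increase you get termination by noetherianity but no explicit bound of the form $2^{\dim X-\dim Z}-1$. Producing that dimension jump (or a substitute for it, via genericity of the base point and comparison of generic fiber dimensions of $R_m\to X$ and $R_{2m+1}\to X$) is exactly the delicate point that the arguments of \cite[IV.4.13--IV.4.16]{Kob} are designed to handle, and your sketch does not supply it.
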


Clearly $X$ is rc$(\V)$-connected if and only if $\dim Z^0=0$.

\begin{proposition}\label{redfl}
Let $V$ be a minimal dominating family of rational curves and denote by $\V$ the associated Chow family.
Assume that $\dim \loc(V)_x \ge s$, for a general $x \in X$ and some integer $s$; then
for every $x \in X$ every irreducible component of $\loc(\V)_x$ has dimension $\ge s$.
\end{proposition}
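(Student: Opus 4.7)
The plan is to realise $\loc(\V)_x$ as the fibre of the first projection of an incidence variety $I \subseteq X \times X$, and then apply the standard fibre dimension inequality for a surjective morphism between irreducible varieties. Concretely, I take $I$ to be the image of the morphism $q_1 \times q_2 : \ch_1(\V) \to X \times X$ introduced earlier in this section: set theoretically, $I$ is the locus of pairs $(x,y)$ lying on a common cycle of $\V$, so the fibre of the first projection $\pi : I \to X$ over $x$ is exactly $\{x\} \times \loc(\V)_x$.

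The first technical step is to check that $I$ is closed and irreducible. Since $X$ is projective, $\V$ is a projective irreducible component of $\textrm{Chow}(X)$, and $\Un \to \V$ is proper with general fibre a smooth irreducible rational curve, because $V$ has dense image in $\V$ and parametrises irreducible rational curves. A standard argument then shows that $\Un$ is irreducible of dimension $\dim \V + 1$, and hence $\ch_1(\V) = \Un \times_{\V} \Un$ is irreducible, its general fibre over $\V$ being $\mathbb{P}^1 \times \mathbb{P}^1$. Being the image of a proper irreducible scheme, $I$ is then closed and irreducible in $X \times X$; moreover, since $V$ is dominating, $\pi$ is dominant, hence surjective by properness.

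To conclude, for a general $x \in X$ the inclusion $\loc(V)_x \subseteq \loc(\V)_x$ together with the hypothesis yields $\dim \pi^{-1}(x) \ge s$; the dimension inequality applied to the surjective map $\pi$ between irreducible varieties then gives $\dim I \ge \dim X + s$, and the corresponding lower bound on every irreducible component of every fibre yields, for each $x \in X$, that each component of $\loc(\V)_x$ has dimension at least $\dim I - \dim X \ge s$. The main point where one must be careful is the irreducibility of $\ch_1(\V)$: although $\V$ may parametrise reducible cycles, the generic cycle is irreducible because $V \to \V$ has dense image, and this is precisely what makes the fibre dimension formula apply to a single irreducible incidence variety, instead of a union of components in which non-dominant pieces could weaken the bound on special fibres.
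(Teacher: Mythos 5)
Your argument is essentially the paper's own proof: the same incidence variety $\mathcal C^1=(q_1\times q_2)(\ch_1(\V))\subset X\times X$, the same identification of the fibres of the first projection with $\loc(\V)_x$, and the same conclusion via lower semicontinuity of the local fibre dimension, with the bound on the general fibre coming from the inclusion $\loc(V)_x\subseteq\loc(\V)_x$. The one step to tighten is the irreducibility of $\ch_1(\V)$: it does \emph{not} follow formally from the irreducibility of $\Un$ (a proper equidimensional family with connected fibres and irreducible general fibre can still acquire extra components over the locus of reducible cycles), and the paper outsources precisely this point to \cite[Lemme 2]{Cam81} (or \cite[Lemma 1.14]{Camor}); your closing remark that the density of irreducible cycles in $\V$ is what makes this work is the right idea, but it is that lemma, rather than the irreducibility of $\Un$ plus ``hence'', that should be invoked.
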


\begin{proof} Consider the morphism  $q_1 \times q_2 \colon \ch_1(\V) \longrightarrow  X \times X$; by \cite[Lemme 2]{Cam81} (or \cite[Lemma 1.14]{Camor}) we know that  $\ch_1(\V)$ is irreducible. Denote by $\mathcal C^1$ the image
$(q_1 \times q_2)(\ch_1(\V)) \subset X \times X$.\\
Let $p: \mathcal C^1 \to X$ be the restriction of the first projection; the inverse image of a point $x_0$ via $p$ consists
of the points which belong to a cycle in $\V$ containing $x_0$, hence $p^{-1}(x_0) = \loc(\V)_{x_0}$. By the minimality assumption, through a general point  $x \in X$ there are no reducible cycles, hence $\dim p^{-1}(x) \ge s$. The statement now follows by the semicontinuity of the local dimension of a fiber (\cite[Corollary 3, pag. 51]{Mu}), which ensures that
the dimension of every irreducible component of every fiber of $p$ has dimension $\ge s$. 
\end{proof}

\begin{corollary}\label{redfl2}
Let $V$ be a minimal dominating family of rational curves and denote by $\V$ the associated Chow family.
Then every  irreducible component of $\loc(\V)_x$ has dimension $\ge -K_X \cdot V -1$.
\end{corollary}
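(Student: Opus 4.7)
The plan is simply to combine Proposition (\ref{iowifam})(b) with Proposition (\ref{redfl}), invoked with the bound $s = -K_X \cdot V - 1$.

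First I would observe that a minimal dominating family is, by its very definition, locally unsplit; so for a general point $x \in \loc(V)$ the scheme $V_x$ is proper. Since $V$ is dominating, $\overline{\loc(V)} = X$, and therefore a general point $x \in X$ lies in $\loc(V)$ and satisfies the properness assumption on $V_x$. Applying Proposition (\ref{iowifam})(b) to such a point gives that every irreducible component of $\loc(V_x) = \loc(V)_x$ has dimension at least $-K_X \cdot V - 1$; in particular $\dim \loc(V)_x \ge -K_X \cdot V - 1$ for general $x \in X$.

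At this point I would feed $s := -K_X \cdot V - 1$ into Proposition (\ref{redfl}). That proposition upgrades the estimate from a general point on the irreducible-curve locus $\loc(V)_x$ to an estimate at \emph{every} point on the cycle locus $\loc(\V)_x$, by the semicontinuity of local fiber dimension applied to the projection $p\colon \mathcal C^1 \to X$. Combining the two statements yields the conclusion: every irreducible component of $\loc(\V)_x$ has dimension at least $-K_X \cdot V - 1$ for every $x \in X$.

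There is no real obstacle here; the content has been absorbed into Proposition (\ref{redfl}). The only point that deserves a line of care is checking that the hypothesis of Proposition (\ref{redfl}) is satisfied, which is precisely where the minimality (hence local unsplittedness) of $V$ enters, so that the classical bound of Proposition (\ref{iowifam}) is available at the general point.
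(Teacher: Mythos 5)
Your proof is correct and coincides with the paper's intended derivation: the corollary is stated without proof precisely because it follows by feeding $s=-K_X\cdot V-1$ into Proposition (\ref{redfl}), the hypothesis at a general point being supplied by Proposition (\ref{iowifam})(b) together with the local unsplittedness of a minimal dominating family. Nothing is missing.
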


Given $\V^1, \dots, \V^k$ Chow families of rational 1-cycles, it is possible to define, as above, a relation of
rc$(\V^1, \dots, \V^k)$-connectedness,  to which it is associated a fibration, which we will call
rc$(\V^1, \dots, \V^k)$-fibration. The variety $X$ will be called  {\em rc$(\V^1, \dots, \V^k)$-connected} if
the target of the fibration is a point.\par
\medskip
For such varieties we have the following application of Lemma (\ref{numeq}):

\begin{proposition} (Cf. \cite[Corollary 4.4]{ACO})\label{rhobound}
If $X$ is rationally connected with respect to some Chow families of rational $1$-cycles
$\V^1, \dots, \V^k$, then $\cycl(X)$ is generated by the classes of irreducible components of cycles
in $\V^1, \dots, \V^k$.\\
In particular, if $\V^1, \dots, \V^k$ are quasi-unsplit families, then $\rho_X \le k$ and equality
holds if and only if $\V^1, \dots, \V^k$ are numerically independent.
\end{proposition}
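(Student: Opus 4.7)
The plan is to reduce the proposition to the numerical equivalence statement in Lemma (\ref{numeq}), applied inductively along a chain of cycles joining a fixed base point to an arbitrary curve. Throughout, let $W \subseteq \cycl(X)$ denote the subspace spanned by the numerical classes of all irreducible components of cycles parametrized by $\V^1, \dots, \V^k$; the goal is to prove $W = \cycl(X)$.

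First I would fix a point $x_0 \in X$ and use the rc$(\V^1, \dots, \V^k)$-connectedness hypothesis (via the bound on the chain length in the theorem cited from \cite[IV.4.16]{Kob}) to obtain an integer $m$ with the following property: setting $Y_0 := \{x_0\}$ and, inductively, $Y_{j+1} := \bigcup_{i=1}^k \loc(\V^i)_{Y_j}$, which is a closed subset of $X$ at each stage, one has $Y_m = X$. This translates the assumption that every two points are joined by a chain of cycles from the families into a finite filtration of $X$ by closed subsets.

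Next I would prove by induction on $j$ that for every irreducible curve $C \subset Y_j$ the class $[C]$ lies in $W$. The base $j=0$ is vacuous since $Y_0$ is a point. For the inductive step, because $C$ is irreducible and $Y_{j+1}$ is the union of the $k$ closed sets $\loc(\V^i)_{Y_j}$, the curve $C$ is entirely contained in $\loc(\V^i)_{Y_j}$ for some $i$. Lemma (\ref{numeq}), applied with $Y = Y_j$ and $\V = \V^i$, then yields an expression
\[
[C] \;=\; a\,[C'] \;+\; \sum_{\ell} b_\ell\,[D_\ell],
\]
with $C' \subset Y_j$ an irreducible curve and the $D_\ell$ irreducible components of cycles of $\V^i$ meeting $Y_j$. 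By the inductive hypothesis $[C'] \in W$, and the $[D_\ell]$ are in $W$ by definition of $W$, so $[C] \in W$. Taking $j = m$ gives $\cycl(X) \subseteq W$, which proves the first assertion.

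For the second assertion, suppose the $\V^i$ are quasi-unsplit. By the defining property, every irreducible component of every cycle parametrized by $\V^i$ has class proportional to $[V^i]$, so $W$ collapses to the span of $[V^1], \dots, [V^k]$. Hence $\rho_X = \dim \cycl(X) = \dim W \le k$, with equality exactly when $[V^1], \dots, [V^k]$ are linearly independent, i.e. when the families are numerically independent in the sense of the definition above. The only subtle point is the inductive step, where one must use irreducibility of $C$ to force it into a single $\loc(\V^i)_{Y_j}$ even when $Y_j$ has several components; apart from that, the argument is essentially bookkeeping on top of Lemma (\ref{numeq}).
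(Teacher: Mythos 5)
Your proof is correct and follows exactly the route the paper intends: the paper gives no written proof (it cites \cite[Corollary 4.4]{ACO}) but explicitly introduces the proposition as ``an application of Lemma (\ref{numeq})'', which is precisely your inductive use of that lemma along the filtration $Y_0 \subset Y_1 \subset \dots \subset Y_m = X$ coming from rc$(\V^1,\dots,\V^k)$-connectedness. The only cosmetic point is that you should take $Y_{j+1} := Y_j \cup \bigcup_i \loc(\V^i)_{Y_j}$ so the filtration is visibly increasing, but this changes nothing in the argument.
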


A straightforward consequence of the above proposition is the following:

\begin{corollary}\label{rhoboundcor}
If $X$ is rationally connected with respect to Chow families of rational $1$-cycles
$\V^1, \dots, \V^k$ and $D$ is an effective divisor, then $D$ cannot be trivial on every irreducible component
of every cycle parametrized by $\V^1, \dots, \V^k$.
\end{corollary}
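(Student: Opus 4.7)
The plan is to argue by contradiction, using Proposition \ref{rhobound} to propagate the triviality hypothesis from the distinguished family components to all of $\cycl(X)$, and then to contradict the fact that an effective nonzero divisor cannot be numerically trivial on a projective variety.

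More precisely, I would suppose that $D \cdot C_j = 0$ for every irreducible component $C_j$ of every cycle parametrized by any of the $\V^i$. By Proposition \ref{rhobound}, the classes $[C_j]$ of such components generate $\cycl(X)$ as an $\mathbb{R}$-vector space. Hence for every $1$-cycle $\Gamma$ on $X$, the class $[\Gamma]$ is a rational linear combination of the $[C_j]$, and therefore
\[
D \cdot \Gamma = 0.
\]
In other words, $D$ would be numerically equivalent to zero.

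To conclude, I would exhibit a single irreducible curve $C \subset X$ that meets $D$ properly: fix an ample line bundle $H$ on $X$ and take $C$ to be a complete intersection of $\dim X - 1$ sufficiently general members of $|mH|$ for $m \gg 0$. Since $D$ is effective and $C$ is not contained in the support of $D$ (the general members of $|mH|$ avoid any prescribed component of $D$), the intersection $D \cdot C$ counts points with positive multiplicities, hence $D \cdot C > 0$. This contradicts numerical triviality of $D$, so $D$ must be nontrivial on some irreducible component of some cycle in one of the $\V^i$.

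No step here is genuinely difficult; the only subtle point to state clearly is that an effective nonzero divisor on a projective variety has positive intersection with some curve, which is what turns the conclusion of Proposition \ref{rhobound} into the desired non-vanishing. Everything else is a direct bookkeeping translation between numerical triviality and the hypothesis that $D$ vanishes on every component of every $\V^i$-cycle.
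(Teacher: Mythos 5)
Your argument is correct and is precisely the ``straightforward consequence'' the paper has in mind: the paper omits the proof, and the intended one is exactly your chain (triviality on the generating classes from Proposition (\ref{rhobound}) forces $D$ to be numerically trivial, contradicting positivity of a nonzero effective divisor against a general complete-intersection curve). The only cosmetic remark is that one should read ``effective divisor'' as ``nonzero effective divisor,'' which your construction of the curve $C$ implicitly uses.
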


\section{Large pseudoindex}\label{large}

In this section we will prove a bound on the Picard number of Fano manifolds which are rationally
connected with respect to a special Chow family. Then
we will show that Conjecture (\ref{GM}) holds for Fano manifolds $X$ of pseudoindex
$i_X \ge (\dim X+3)/{3}$.\\

We start with a technical result:

\begin{lemma}\label{freq} Let $X$ be a Fano manifold of pseudoindex $i_X$, let $Y \subset X$ be a closed irreducible subset
of dimension $\dim Y > \dim X - i_X$
and let $W$ be an unsplit non dominating family of rational curves such that $[W] \not \in \conx{Y}$.\\
Then $\loc(W) \cap Y = \emptyset$.
\end{lemma}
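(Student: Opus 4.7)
The plan is to argue by contradiction: suppose $\loc(W)\cap Y \ne \emptyset$ and derive that $W$ must be dominating, against the hypothesis. The whole argument should fall out of the dimensional estimate in Lemma~\ref{locy} once we observe that the hypothesis on $\dim Y$ is exactly tailored to make the estimate overflow $\dim X$.

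First I would pick a point in $\loc(W)\cap Y$. Such a point lies on a curve parametrized by $W$ that meets $Y$ (namely at that point itself), so it belongs to $\loc(W)_Y$, and in particular $\loc(W)_Y$ is non-empty. Since $Y$ is closed and irreducible, and $W$ is unsplit with $[W]\notin \conx{Y}$, we are in position to apply Lemma~\ref{locy} with $k=1$, which yields
\[
\dim \loc(W)_Y \;\ge\; \dim Y + (-K_X\cdot W) - 1.
\]

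Next I would plug in the numerical hypotheses. By definition of pseudoindex $-K_X\cdot W \ge i_X$, and the assumption $\dim Y > \dim X - i_X$ means $\dim Y \ge \dim X - i_X + 1$. Combining these,
\[
\dim \loc(W)_Y \;\ge\; (\dim X - i_X + 1) + i_X - 1 \;=\; \dim X,
\]
so $\loc(W)_Y = X$. On the other hand, by construction $\loc(W)_Y \subseteq \loc(W)$, and since $W$ is unsplit, $\loc(W)$ is closed in $X$; the assumption that $W$ is non-dominating then forces $\loc(W)\subsetneq X$, contradicting $\loc(W)_Y = X$. Hence $\loc(W)\cap Y = \emptyset$, as claimed.

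I do not expect any real obstacle here: the only delicate point is checking that the hypotheses of Lemma~\ref{locy} are met (irreducibility of $Y$, unsplitness of $W$, and the condition $\langle [W]\rangle \cap \conx{Y} = 0$, which is exactly $[W]\notin \conx{Y}$), and that the strict inequality on $\dim Y$ together with the pseudoindex bound exactly saturates the estimate. There are no further cases to handle.
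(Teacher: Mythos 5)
Your proof is correct and follows essentially the same route as the paper: a single application of Lemma~\ref{locy} with $k=1$, combined with $-K_X\cdot W\ge i_X$ and $\dim Y>\dim X-i_X$, forces $\dim\loc(W)_Y\ge\dim X$ and hence that $W$ is dominating, a contradiction. The paper's proof is a one-line version of exactly this argument.
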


\begin{proof} If the intersection were nonempty, by Lemma (\ref{locy}) we would
have 
$$\dim \loc(W)_Y \ge \dim Y -K_X \cdot W -1 > \dim X-1,$$ 
so $W$ would be a dominating family,
a contradiction.  
\end{proof}

\begin{theorem}\label{pasqua} Let $X$ be a Fano manifold of Picard number $\rho_X$ 
and pseudoindex $i_X$, and let $V$ be a minimal dominating family
of rational curves for $X$. Assume that $X$ is rc$(\V)$-connected and that 
$3i_X > -K_X \cdot V > \dim X+1-i_X$.
Then $\rho_X=1$.
\end{theorem}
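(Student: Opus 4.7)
The approach has two main moves: first, control the possible degenerations of cycles in $\V$; second, iteratively enlarge a closed subset on which the generated subspace of $\cycl(X)$ is forced into $\langle [V] \rangle$, until it exhausts $X$.

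\textbf{Setup.} Since every rational curve has $-K_X$-degree at least $i_X$ and $-K_X \cdot V < 3 i_X$, any reducible cycle in $\V$ has exactly two irreducible components. Let $W^1, \ldots, W^t$ be the families of deformations of the irreducible components of reducible cycles in $\V$. The bound $-K_X \cdot W^j \le -K_X \cdot V - i_X < 2 i_X$ ensures each $W^j$ is unsplit (no further degeneration is compatible with the pseudoindex), and minimality of $V$ forces each $W^j$ to be non-dominating. For every reducible cycle $C_1 + C_2 \in \V$ with $C_i \in W^{j_i}$ one has the relation $[V] = [W^{j_1}] + [W^{j_2}]$ in $\cycl(X)$.

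\textbf{Base case.} Choose $x \in X$ general, in particular outside the proper closed subset $\bigcup_j \loc(W^j)$; no reducible cycle of $\V$ then passes through $x$, so $\loc(\V)_x = \loc(V_x)$. Corollary (\ref{numcor})(a) yields $\cycx{\loc(\V)_x} = \langle [V] \rangle$, while Corollary (\ref{redfl2}) gives that every irreducible component of $\loc(\V)_x$ has dimension $\ge -K_X \cdot V - 1 > \dim X - i_X$.

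\textbf{Iteration.} Set $T_0 := \loc(\V)_x$ and $T_{k+1} := \loc(\V)_{T_k}$. The plan is to prove by induction on $k$:
(i) every irreducible component of $T_k$ has dimension strictly greater than $\dim X - i_X$;
(ii) $\cycx{T_k} \subseteq \langle [V] \rangle$.
For (i), a point $y' \in T_{k+1}$ lies on a cycle $\Gamma \in \V$ meeting $T_k$; fixing $y \in \Gamma \cap T_k$ gives $y' \in \loc(\V)_y$, and Corollary (\ref{redfl2}) says the irreducible component of $\loc(\V)_y$ through $y'$ has dimension $\ge -K_X \cdot V - 1$. Since this subset is irreducible and lies inside $T_{k+1}$, the component of $T_{k+1}$ through $y'$ has dimension $> \dim X - i_X$. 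For (ii), Lemma (\ref{numeq}) writes any curve class in $T_{k+1}$ as a combination of classes in $\cycx{T_k} \subseteq \langle [V] \rangle$ and classes of components of cycles of $\V$ meeting $T_k$. Irreducible such cycles contribute $[V]$. For a reducible $C_1 + C_2$ with, say, $C_1$ meeting an irreducible component $T_k^{(j)}$ of $T_k$, the contrapositive of Lemma (\ref{freq}) (applicable because $T_k^{(j)}$ is irreducible with $\dim > \dim X - i_X$ and $W^{j_1}$ is unsplit non-dominating) forces $[W^{j_1}] \in \conx{T_k^{(j)}} \subseteq \cycx{T_k} \subseteq \langle [V] \rangle$; the relation $[W^{j_2}] = [V] - [W^{j_1}]$ then places the other component's class in $\langle [V] \rangle$ as well.

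\textbf{Conclusion.} The increasing chain $T_0 \subseteq T_1 \subseteq \cdots$ of closed subsets stabilizes at some $T_\infty$ satisfying $\loc(\V)_{T_\infty} = T_\infty$; since $X$ is rc$(\V)$-connected and $x \in T_\infty$, the rc$(\V)$-equivalence class of $x$ equals $X$, forcing $T_\infty = X$. Hence $\cycl(X) = \cycx{X} \subseteq \langle [V] \rangle$ and $\rho_X = 1$. The crux, and the expected main obstacle, is step (i): without the uniform dimension bound persisting through each iteration Lemma (\ref{freq}) becomes unavailable for the reducible cycles arising further along the chain, and one cannot rule out that the class of a component drifts outside $\langle [V] \rangle$. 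This is precisely where Corollary (\ref{redfl2}) — the extension of the classical dimension estimate to limits of curves through a point — does the essential work.
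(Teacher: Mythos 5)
Your proof is correct and takes essentially the same route as the paper's: both rest on the fact that a general point avoids the (non-dominating) loci of the families of deformations of degenerate components, and then propagate $\cycx{\,\cdot\,}\subseteq\langle [V]\rangle$ along the chain using Corollary (\ref{redfl2}) for the dimension bound on components of $\loc(\V)_y$ and Lemma (\ref{freq}) together with Lemma (\ref{numeq}) for the numerical control. The only difference is presentational: you run a forward induction on the sets $T_k=\cloc_{k+1}(\V)_x$, whereas the paper argues by contradiction on the minimal position $j_0$ of a bad cycle in a chain.
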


\begin{proof}
Since $X$ is rc$(\V)$-connected, for some integer $m$ the morphism 
$\ch_m(\V) \longrightarrow X \times X$ is onto; equivalently, $X=\cloc_m(\V)_x$ for every
$x \in X$. Let $x$ be a general point; we will show that every irreducible component of a $\V$-cycle in a connected $m$-chain passing though $x$ is numerically proportional to $V$. The statement will then follow by repeated applications of Lemma (\ref{numeq}).\\
Since $-K_X \cdot V < 3i_X$, any reducible $\V$-cycle $\Gamma$ has two irreducible components,
hence either both of them are numerically proportional to $V$ or neither of them is numerically proportional to $V$.\\
Assume by contradiction that there exist $m$-chains through $x$, $\Gamma_1 \cup \Gamma_2 \cup \dots \cup \Gamma_m$, with $x \in \Gamma_1$ and $\Gamma_i \cap \Gamma_{i+1} \not = \emptyset$, such that, 
for some $j \in \{1, \dots, m\}$ the irreducible components $\Gamma_j^1$ and $\Gamma_j^2$ of $\Gamma_j$
are not numerically proportional to $\V$.\\
Let $j_0 \in \{1, \dots, m\}$ be the minimum integer for which such a chain exists; by the generality of $x$ we have $j_0 \ge 2$.
If $j_0=2$ set $x_1=x$, otherwise let $x_1$ be a point in $\Gamma_{j_0-1} \cap \Gamma_{j_0 -2}$. Since
$\Gamma_{j_0-1} \subset \loc (\V)_{x_1}$ there is  an irreducible component $Y$ of $\loc(V)_{x_1}$
which meets $\Gamma_{j_0}$.\\ 
By Corollary (\ref{redfl2}) we have $\dim Y \ge -K_X \cdot V -1 > \dim X -i_X$; moreover, since $j_0$ was minimal,
every cycle parametrized by $\V$ passing through $x_1$ is numerically proportional to $\V$, hence
$\cycx{Y}=\langle [V] \rangle$ by Lemma (\ref{numeq}).\\
Let $\gamma$ be a component of $\Gamma_{j_0}$ meeting $Y$.
Denote by $W$ a family of deformations of $\gamma$; then the
family $W$ is unsplit, as $-K_X \cdot V < 3i_X$ and it is not dominating, by the minimality of $V$.
We now get the desired contradiction by Lemma (\ref{freq}).  
\end{proof}

\begin{construction}\label{kfam} Let $X$ be a Fano manifold; let
$V^1$ be a minimal dominating family of rational curves on $X$ and consider the associated Chow family $\V^1$.\\
If $X$ is not rc$(\V^1)$-connected, let $V^2$ be a minimal
horizontal dominating family with respect to the rc$(\V^1)$-fibration,
$\pi_{1}\colon X^{} \xymatrix{\ar@{-->}[r]^{}&} Z^{1}$. 
If $X$ is not rc$(\V^1, \V^2)$-connected,
we denote by $V^3$ a minimal horizontal dominating family with respect to the
the rc$(\V^1,\V^2)$-fibration, $\pi_{2}\colon X^{} \xymatrix{\ar@{-->}[r]^{}&} Z^{2}$, and so on.
Since $\dim Z^{i+1} < \dim Z^i$, for some integer $k$ we have that $X$ is
rc$(\V^1, \dots, \V^k)$-connected.\\
Notice that, by construction, the families $V^1, \dots, V^k$ are numerically independent.
\end{construction}

\begin{lemma}\label{kfamprop}
Let $X$ be a Fano manifold of pseudoindex $i_X \ge 2$ and let $V^1, \dots, V^k$ be families
of rational curves as in Construction (\ref{kfam}). Then
$$\sum_{i=1}^k(-K_X \cdot V^i -1) \le \dim X.$$
In particular, $k (i_X -1) \le \dim X$, and equality holds if and only if $X = (\proj^{i_X-1})^k$.
\end{lemma}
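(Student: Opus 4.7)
The plan is to establish, by induction on $i$, the dimension estimate
$$\dim F_i \;\ge\; \sum_{j=1}^{i}\bigl(-K_X\cdot V^j-1\bigr),$$
where $F_i$ denotes a general fiber of the rc$(\V^1,\dots,\V^i)$-fibration supplied by Construction~(\ref{kfam}). Since $X$ is rc$(\V^1,\dots,\V^k)$-connected, we have $F_k=X$, so the case $i=k$ yields the stated inequality. The ``in particular'' clause then follows immediately from the pseudoindex lower bound $-K_X\cdot V^j\ge i_X$.

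For the base case $i=1$ I would take a general $x\in X$ and apply Corollary~(\ref{redfl2}) to the minimal dominating family $V^1$: every irreducible component of $\loc(\V^1)_x$ has dimension at least $-K_X\cdot V^1-1$, and is contained in $F_1$. For the inductive step I would choose a general $x\in F_{i+1}$ and let $F_i$ be the fiber of $\pi_i$ through $x$, which is contained in $F_{i+1}$ because an rc$(\V^1,\dots,\V^i)$-equivalence class is refined by an rc$(\V^1,\dots,\V^{i+1})$-class. By induction $\dim F_i\ge\sum_{j\le i}(-K_X\cdot V^j-1)$. For a general $y\in F_i$, Corollary~(\ref{redfl2}) supplies an irreducible component of $\loc(\V^{i+1})_y$ of dimension at least $-K_X\cdot V^{i+1}-1$; by the horizontality of $V^{i+1}$ with respect to $\pi_i$, this component is not contracted by $\pi_i$, and in particular is not contained in $F_i$. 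Letting $y$ vary over $F_i$ and taking the union of these transverse components produces a subvariety of $F_{i+1}$ whose dimension is at least $\dim F_i+(-K_X\cdot V^{i+1}-1)$, since the image in $Z^i$ acquires $-K_X\cdot V^{i+1}-1$ additional directions transverse to $F_i$ thanks to horizontality.

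For the equality case, suppose $k(i_X-1)=\dim X$. Then every inequality above must be an equality: $-K_X\cdot V^i=i_X$ for every $i$, each $V^i$ attains the pseudoindex, and $\dim F_i-\dim F_{i-1}=i_X-1$ at every stage. At the base step this gives $F_1=\loc(\V^1)_x$ of dimension $i_X-1=-K_X\cdot V^1-1$, and by the characterization of Fano manifolds with maximal pseudoindex (Cho--Miyaoka--Shepherd-Barron, or Kobayashi--Ochiai in this sharp setting) one identifies $F_1\cong\proj^{i_X-1}$. Iterating, and using the splitting technique of \cite{Wimu} for horizontal families saturating the pseudoindex bound, I would argue that each successive restriction $\pi_i|_{F_{i+1}}$ is a trivial $\proj^{i_X-1}$-bundle, so that $X\cong(\proj^{i_X-1})^k$.

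The main obstacle is the inductive step in the equality case: showing that the successive rc-fibrations split as trivial projective-bundles requires a careful analysis of the numerical classes and intersection numbers of horizontal families on fibers that are already known to be projective spaces, and invokes the rigidity results of \cite{Wimu} and \cite{BCDD}. By contrast, the core dimensional induction is relatively routine, the one delicate point being the horizontality argument that certifies the added $-K_X\cdot V^{i+1}-1$ dimensions as genuinely new with respect to $F_i$.
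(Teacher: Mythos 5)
Your dimensional estimate is essentially the paper's argument in different clothing: the paper phrases it as ``at the $i$-th step the dimension of the quotient $Z^i$ drops by at least $\dim\loc(V^i)_{x_i}$,'' which is exactly your statement that the general fiber of the rc$(\V^1,\dots,\V^i)$-fibration grows by at least $-K_X\cdot V^i-1$ at each stage; the paper then sums and uses $-K_X\cdot V^i\ge i_X$. Two small points: the lower bound $\dim\loc(V^i)_{x_i}\ge -K_X\cdot V^i-1$ should be drawn from Proposition~(\ref{iowifam})(b) at a general point (where $V^i_{x_i}$ is proper because the family is locally unsplit), not from Corollary~(\ref{redfl2}), which is stated only for a minimal dominating family of $X$ and its Chow family, not for the horizontal families $V^i$, $i\ge 2$. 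And your justification that the new $-K_X\cdot V^{i+1}-1$ directions are ``genuinely new'' --- i.e.\ that $\pi_i$ is generically finite on $\loc(V^{i+1})_y$ --- is asserted via ``horizontality'' but really needs the observation that every curve in $\loc(V^{i+1})_y$ is numerically proportional to $[V^{i+1}]$ (Corollary~(\ref{numcor})(a)), hence not contracted by $\pi_i$. The paper is equally terse here, so I would not count this against you.

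The genuine gap is in the equality case. You correctly extract from equality that $-K_X\cdot V^i=i_X$ and $\dim\loc(V^i)_{x_i}=i_X-1$ for every $i$ (from which it also follows, via Proposition~(\ref{iowifam})(a), that each $V^i$ is unsplit and covering --- a step you should make explicit, since it is the hypothesis needed next). But from there you propose to identify $F_1$ with $\proj^{i_X-1}$ ``by Cho--Miyaoka--Shepherd-Barron or Kobayashi--Ochiai'' and then to trivialize the successive fibrations; you yourself flag this as the main obstacle and do not carry it out. As stated it does not work: CMSB characterizes $\proj^n$ among smooth varieties carrying a family of degree $n+1$ through a point, whereas $\loc(V^1)_x$ is a priori just a closed subset of $X$ with no smoothness, and the subsequent ``splitting into a trivial $\proj^{i_X-1}$-bundle'' is precisely the hard content, not a routine iteration. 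The paper closes this case in one line by invoking \cite[Theorem 1]{Op}, Occhetta's characterization of products of projective spaces by the existence of $k$ unsplit covering families with $\sum\dim\loc(V^i)_{x_i}=\dim X$; without that citation (or an actual proof of the product structure) your argument for the ``only if'' direction of the equality statement is incomplete.
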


\begin{proof}
In Construction (\ref{kfam}) at the $i$-th step, denoted by $x_i$ a general point in $\loc(V^i)$, the dimension of the 
quotient drops at least by $\dim \loc(V^i)_{x_i}$, which,
by part (b) of Proposition (\ref{iowifam}), is greater than or equal to $-K_X \cdot V^i -1$.
It follows that
$$\dim X \ge \sum_{i=1}^k \dim \loc(V^i)_{x_i} \ge\sum_{i=1}^k(-K_X \cdot V^i -1) \ge ki_X -k = k (i_X-1).$$
If $\dim X = k (i_X-1)$, then for any $i$ we have $-K_X \cdot V^i= i_X$,
so $V^i$ is an unsplit family and $\dim \loc(V^i)_{x_i} = i_X -1$, hence the family $V^i$ is covering
by part (a) of Proposition (\ref{iowifam}). We can now apply \cite[Theorem 1]{Op} to conclude.  
\end{proof} 

\begin{theorem}\label{terzi}
Let $X$ be a Fano manifold of Picard number $\rho_X$ and pseudoindex $i_X \ge (\dim X+3)/{3}$. Then
$\rho_X (i_X-1) \leq \dim X$ and equality holds if and only if $X=(\proj^{i_X-1})^{\rho_X}$.
\end{theorem}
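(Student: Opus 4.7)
The plan is to combine Construction \ref{kfam} with a quasi-unsplit argument modeled on Theorem \ref{pasqua}, bounding $\rho_X$ from above and then invoking Lemma \ref{kfamprop}.

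First I would apply Construction \ref{kfam} to produce numerically independent families $V^1,\ldots,V^k$ with $X$ being rc$(\V^1,\ldots,\V^k)$-connected. Each $V^i$ is locally unsplit, so Mori's Bend and Break gives $-K_X\cdot V^i\le\dim X+1$; together with $i_X\ge(\dim X+3)/3$ this forces $-K_X\cdot V^i<3i_X$. Hence any reducible cycle in $\V^i$ breaks into exactly two irreducible components, each of $-K_X$-degree at least $i_X$ and strictly less than $2i_X$.

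The core step is to show that each $\V^i$ is quasi-unsplit. Assume to the contrary that a reducible cycle $\Gamma=C_1+C_2$ in $\V^i$ has $[C_1]$ not proportional to $[V^i]$; then the family $W$ of deformations of $C_1$ is unsplit. Since a reducible cycle in $\V^i$ exists one has $-K_X\cdot V^i\ge 2i_X$, and $i_X\ge(\dim X+3)/3$ implies $-K_X\cdot V^i>\dim X+1-i_X$, so the numerical input of Theorem \ref{pasqua} is in force. Following the chain-tracing strategy of that proof, I would pick a shortest chain of $\V^i$-cycles connecting a general point to $\Gamma$, choose as $x_1$ the last vertex at which no proportionality failure has yet occurred, and take an irreducible component $Y$ of $\loc(V^i)_{x_1}$ meeting $C_1$. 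The horizontal-dominating analogue of Corollary \ref{redfl2} yields $\dim Y\ge -K_X\cdot V^i-1>\dim X-i_X$, and Lemma \ref{numeq} forces $\cycx{Y}=\langle[V^i]\rangle$; hence $[W]\notin\conx{Y}$, and Lemma \ref{freq} produces $\loc(W)\cap Y=\emptyset$, contradicting $C_1\subset\loc(W)$.

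Once each $\V^i$ is quasi-unsplit, Proposition \ref{rhobound} together with numerical independence of the $V^i$'s gives $\rho_X\le k$, while Lemma \ref{kfamprop} gives $k(i_X-1)\le\dim X$; combining these yields $\rho_X(i_X-1)\le\dim X$. If equality holds then $\rho_X=k$ and $k(i_X-1)=\dim X$, so the equality clause of Lemma \ref{kfamprop} identifies $X$ with $(\proj^{i_X-1})^{\rho_X}$.

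The main obstacle is the quasi-unsplit step. Theorem \ref{pasqua} as stated assumes rc$(\V)$-connectedness of all of $X$, precisely the situation Construction \ref{kfam} bypasses when $k\ge 2$, so one must adapt the chain-tracing argument to chains within a single family $\V^i$ starting from a generic point of $\loc(V^i)$ rather than of $X$. Two auxiliary issues are to extend Corollary \ref{redfl2} to minimal horizontal dominating families, and to rule out the possibility that the non-proportional component $C_1$ has class already in $\langle[V^1],\ldots,[V^{i-1}]\rangle$ rather than introducing a genuinely new class. Making these adaptations rigorous while reaching the final contradiction is the technical heart of the argument.
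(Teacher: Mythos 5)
Your proposal correctly handles the case in which all the families $V^1,\dots,V^k$ from Construction (\ref{kfam}) are unsplit, and the final bookkeeping (Proposition (\ref{rhobound}) plus Lemma (\ref{kfamprop}), including the equality clause) matches the paper. But the case of a non-unsplit family is left genuinely open: you explicitly defer the ``technical heart'' --- proving quasi-unsplitness of each $\V^i$ by a chain-tracing argument inside a fiber of the rc-fibration, extending Corollary (\ref{redfl2}) to minimal horizontal dominating families, and controlling components whose classes already lie in $\langle [V^1],\dots,[V^{i-1}]\rangle$ --- and none of these steps is carried out. As written this is not a proof, and the extension of Corollary (\ref{redfl2}) you would need is not obviously available, since Proposition (\ref{redfl}) rests on the irreducibility of $\ch_1(\V)$ for a \emph{dominating} family.

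The missing observation, which is how the paper closes the argument, is that the configuration you are trying to handle never occurs. If some $V^j$ in Construction (\ref{kfam}) is not unsplit, then $-K_X\cdot V^j\ge 2i_X$, and feeding this into the inequality $\sum_i(-K_X\cdot V^i-1)\le\dim X$ of Lemma (\ref{kfamprop}) together with $i_X\ge(\dim X+3)/3$ gives, already for $k\ge 2$,
$$(2i_X-1)+(i_X-1)=3i_X-2\ge 3\,\frac{\dim X+3}{3}-2=\dim X+1>\dim X,$$
a contradiction. Hence $k=1$, so $X$ is rc$(\V^1)$-connected and $V^1$ is a minimal dominating family with $3i_X>\dim X+1\ge -K_X\cdot V^1\ge 2i_X>\dim X+1-i_X$; this is exactly the hypothesis of Theorem (\ref{pasqua}) as stated, which yields $\rho_X=1$ with no adaptation of the chain-tracing argument. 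The difficulties you flag (horizontal families, chains within fibers of the fibration) are artifacts of not exploiting Lemma (\ref{kfamprop}) a second time; once you do, the proof is complete and short.
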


\begin{proof}
Let $V^1, \dots, V^k$ be families of rational curves such as in Construction (\ref{kfam});
by Lemma (\ref{kfamprop}) we have that $k (i_X -1) \le \dim X$.
If for some $j$ the family $V^j$ is not unsplit we have $-K_X \cdot V^j \ge 2i_X$, so, again by Lemma (\ref{kfamprop}), 
this can happen for at most one $j$ and implies $k = 1$.\par
\medskip
If all the families $V^i$ are unsplit, then we have $\rho_X = k$ by Proposition (\ref{rhobound}).
Moreover, if $k(i_X-1)=\dim X$, by Lemma (\ref{kfamprop}) we have $X=(\proj^{i_X-1})^{\rho_X}$.\par
\medskip
We can thus assume that $V^1$ is not unsplit and $X$ is rc$(\V^1)$-connected.\\
By the minimality of $V^1$ we have $-K_X \cdot V^1 \le \dim X+1 <3i_X$; on the other hand, since $V^1$ is not unsplit,
we have
$$-K_X \cdot V^1 \ge 2i_X \ge 2\,\,\frac{\dim X+3}{3} > 2\;\frac{\dim X}{3} \ge \dim X+1 -i_X,$$
so we can apply Theorem (\ref{pasqua}) to conclude.  
\end{proof}

\section{Low dimensions}

In this section we will present different proofs of Conjecture (\ref{GM}) for Fano manifolds
of dimension four and five, which are simpler and shorter than the original ones.

\begin{theorem}\label{four}
Let $X$ be a Fano manifold of Picard number $\rho_X$, pseudoindex $i_X$ and dimension $4$. Then
$\rho_X (i_X-1) \leq 4$.
Moreover, equality holds if and only if $X=\proj^4$, or $X=\proj^2 \times \proj^2$, or
$X=\proj^1 \times \proj^1 \times \proj^1 \times \proj^1$.
\end{theorem}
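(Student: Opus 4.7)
The plan is to split by the value of $i_X$, noting that $i_X\le\dim X+1=5$ for any Fano manifold. When $i_X=1$ the inequality is trivial and no equality case arises. When $i_X\ge 4$, Wi\'sniewski's bound yields $\rho_X=1$, so $\rho_X(i_X-1)\le 4$, and equality forces $i_X=5$ and hence $X=\proj^4$. When $i_X=3$, Theorem~\ref{terzi} applies (since $(\dim X+3)/3=7/3\le 3$) and gives $\rho_X\le 2$, with equality iff $X=(\proj^2)^2$.

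The main case is $i_X=2$. I would apply Construction~\ref{kfam} to obtain numerically independent families $V^1,\dots,V^k$ with $X$ rc$(\V^1,\dots,\V^k)$-connected; by Lemma~\ref{kfamprop} we have $k=k(i_X-1)\le\dim X=4$. If every $V^i$ is unsplit, then Proposition~\ref{rhobound} immediately gives $\rho_X=k\le 4$, and the equality case of Lemma~\ref{kfamprop} identifies $X=(\proj^1)^4$. If some $V^{j_0}$ is non-unsplit, then $-K_X\cdot V^{j_0}\ge 2i_X=4$, and the bound $\sum_i(-K_X\cdot V^i-1)\le 4$ from Lemma~\ref{kfamprop} forces $k\le 2$. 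If $k=1$, Bend and Break gives $-K_X\cdot V^1\le\dim X+1=5$, so the inequalities $3i_X=6>-K_X\cdot V^1\ge 4>3=\dim X+1-i_X$ let Theorem~\ref{pasqua} conclude $\rho_X=1$.

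The remaining subcase, $k=2$ with $V^1$ unsplit of $-K_X$-degree $2$ and $V^2$ non-unsplit of $-K_X$-degree $4$ (forced by the sum constraint together with $i_X=2$), is the main obstacle. Proposition~\ref{rhobound} does not directly give $\rho_X\le 2$ here because $V^2$ need not be quasi-unsplit: a reducible cycle of $\V^2$ splits as two irreducible rational curves of $-K_X$-degree $i_X=2$, whose families of deformations are unsplit but may contribute new numerical classes. The plan is to pick a component $\Gamma$ of such a reducible cycle that is horizontal with respect to the rc$(\V^1)$-fibration (at least one exists, since the cycle itself is not contracted by this fibration); its deformation family $W$ is then unsplit of $-K_X$-degree $2$, and if $W$ were horizontal dominating then its small degree would violate the minimality of $V^2$. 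One would then apply Lemma~\ref{freq} with $Y=\loc(V^2)_y$ for a general $y$ (of dimension $3>\dim X-i_X=2$, by the equality case of Lemma~\ref{kfamprop}) together with Corollary~\ref{numcor} to force the class of $\Gamma$ into $\langle[V^1]\rangle$, leading either to a contradiction with the numerical independence of $V^1$ and $V^2$ or to $\rho_X\le 2$, which completes the proof.
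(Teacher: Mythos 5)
Your reduction to the case $i_X=2$ via Theorem \ref{terzi} (for $i_X\ge 3$) is legitimate and slightly streamlines the paper's setup, and the all-unsplit case and the $k=1$ case via Theorem \ref{pasqua} are handled correctly. The problems are concentrated in the case $k=2$ with a non-unsplit family. First, the sum constraint of Lemma \ref{kfamprop} forces the two anticanonical degrees to be $4$ and $2$, but it does \emph{not} force the non-unsplit family of degree $4$ to be $V^2$: the configuration ``$V^1$ non-unsplit of degree $4$, $V^2$ unsplit of degree $2$'' is not excluded by anything you say, and it is precisely the configuration the paper has to work to rule out. (It shows that if $X$ is not rc$(\V^1)$-connected then $\dim Z^1=1$ because a general fiber contains $\loc(V^1)_{x_1}$ of dimension $\ge 3$; then $-K_X\cdot V^2\le \dim Z^1+1=2$, so $V^2$ would be a \emph{dominating} family of degree $2$ by Proposition \ref{iowifam}(a), contradicting the minimality of $V^1$. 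Hence $X$ is rc$(\V^1)$-connected and Theorem \ref{pasqua} applies.) Your case analysis is therefore incomplete.

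Second, in the subcase you do treat ($V^2$ non-unsplit of degree $4$), the sketch does not go through as stated. A reducible member of $\V^2$ is a limit of $V^2$-curves and may lie entirely inside an rc$(\V^1)$-equivalence class (for instance over the indeterminacy locus of $\pi_1$), so ``at least one component is horizontal'' is unjustified; moreover Lemma \ref{freq} applied to $Y=\loc(V^2)_y$ only yields $\loc(W)\cap Y=\emptyset$, which gives no information on $[\Gamma]$ unless $\Gamma$ actually meets $\loc(V^2)_y$ --- and reducible cycles of $\V^2$ form a special subfamily that need not come near a general $y$. You are also solving a harder problem than necessary: quasi-unsplitness of $\V^2$ is not needed. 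Since $-K_X\cdot V^2=4$, Proposition \ref{iowifam}(b) gives $\dim\loc(V^2)_{x_2}\ge 3$ for general $x_2$, and Lemma \ref{locy} then gives $\dim\loc(V^2,V^1)_{x_2}\ge 3+i_X-1=4$, i.e. $X=\loc(V^2,V^1)_{x_2}$; Corollary \ref{numcor}(b) immediately yields $\cycl(X)=\langle[V^1],[V^2]\rangle$ and $\rho_X=2$. This is the paper's argument for that subcase and it replaces your entire last paragraph.
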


\begin{proof} Clearly we can assume $i_X \ge 2$.
Let $V^1, \dots, V^k$ be families of rational curves as in Construction (\ref{kfam});
by Lemma (\ref{kfamprop}) we get $k(i_X-1)\le 4$, hence $k \le 4$.\\
If for some $j$ the family $V^j$ is not unsplit, then $-K_X \cdot V^j \ge 2i_X \ge 4$, hence, by Lemma (\ref{kfamprop}), 
this can happen for at most one $j$ and implies $k \le 2$ and $i_X=2$.\par
\medskip
If all the families $V^i$ are unsplit, then $\rho_X =k$ by Proposition (\ref{rhobound}).
Moreover, if $k(i_X-1)=4$, we have $X=(\proj^{i_X-1})^{\rho_X}$ by Lemma (\ref{kfamprop}).\par
\smallskip
We can thus assume that one of these families, say $V^j$, is not unsplit. 
By part (a) of Corollary (\ref{numcor}), we have $\cycl(\loc(V^j)_{x_j}, X)=\langle [V^j] \rangle$
for a general point $x_j \in \loc(V^j)$.\par
\smallskip
If $j=2$, then, for a general point $x_2 \in \loc (V^2)$, we have 
$X= \loc(V^2,V^1)_{x_2}$ by Lemma (\ref{locy}). Therefore, by part (b) of Corollary (\ref{numcor}), we obtain
that $\cycl(X) = \langle [V^1], [V^2] \rangle$, so $\rho_X=2$.\par
\smallskip
Assume now that $j=1$, {\em i.e.} $V^1$ is not unsplit. We claim that $X$ is rc$(\V^1)$-connected.\\
Notice that, by the minimality of $V^1$,
we can assume that $X$ has no dominating families of rational curves of anticanonical
degree $< 2i_X =4$. If $X$ is not rc$(\V^1)$-connected, since a general fiber
of $\pi_1$ contains $\loc(V^1)_{x_1}$ which, by part (b) of Proposition (\ref{iowifam}),
has dimension at least three, then $\dim Z^1 = 1$.
By part (b) of Proposition (\ref{iowifam}), for a general point $x_2 \in \loc(V^2)$, we get
$$2 \le -K_X \cdot V^2 \le \dim \loc(V^2)_{x_2}+1 \le \dim Z^1+1 = 2,$$
hence $V^2$ has anticanonical degree $2$, and so it is dominating by part (a) of the same proposition, 
a contradiction which proves the claim.\par
\smallskip
Therefore $X$ is rc$(\V^1)$-connected and we can apply Theorem (\ref{pasqua}) to get
$\rho_X=1$.  
\end{proof}

\begin{theorem}\label{five}
Let $X$ be a Fano manifold of Picard number $\rho_X$, pseudoindex $i_X$ and dimension $5$. Then
$\rho_X (i_X-1) \leq 5$.
Moreover, equality holds if and only if either $X=\proj^5$ or 
$X=\proj^1 \times \proj^1 \times \proj^1 \times \proj^1 \times \proj^1$.
\end{theorem}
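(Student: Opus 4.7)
The plan is to mimic the proof of Theorem \ref{four}, dispatching the large-pseudoindex cases by Theorem \ref{terzi}. Since $(\dim X + 3)/3 = 8/3$, Theorem \ref{terzi} applies whenever $i_X \ge 3$ and yields the inequality $\rho_X(i_X-1) \le 5$ together with the equality characterization; the only equality in dimension five with $i_X \ge 3$ is $(i_X,\rho_X) = (6,1)$, giving $X = \proj^5$. It therefore suffices to treat $i_X = 2$ and show $\rho_X \le 5$ with equality iff $X = (\proj^1)^5$.

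Assume $i_X = 2$ and run Construction \ref{kfam} to obtain numerically independent families $V^1, \dots, V^k$ with $X$ rationally connected with respect to $\V^1, \dots, \V^k$; Lemma \ref{kfamprop} gives $k \le 5$. If every $V^i$ is unsplit, Proposition \ref{rhobound} gives $\rho_X = k$, and $\rho_X = 5$ forces $-K_X \cdot V^i = 2$ for all $i$, so $X = (\proj^1)^5$ by Lemma \ref{kfamprop}. Otherwise some $V^j$ is not unsplit; then $-K_X \cdot V^j \ge 2 i_X = 4$, and the sum bound of Lemma \ref{kfamprop} forces at most one non-unsplit family and $k \le 3$. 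The task becomes to show $\rho_X < 5$ in every remaining sub-case.

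For $j = 1$, a dimension count on the rc$(\V^1)$-fibration exactly as in the proof of Theorem \ref{four} forces $k = 1$ by the minimality of $V^1$, so $X$ is rc$(\V^1)$-connected; I then split by $-K_X \cdot V^1 \in \{4, 5, 6\}$. Degree $6$ forces $\loc(V^1)_x = X$ for general $x$ by Proposition \ref{iowifam}(b), and Corollary \ref{numcor}(a) yields $\rho_X = 1$. Degree $5$ satisfies $4 < 5 < 6$, so Theorem \ref{pasqua} gives $\rho_X = 1$. For $j \ge 2$, I adapt the dimension-$4$ argument: from a general $x_j \in \loc(V^j)$, iterating Lemma \ref{locy} and Corollary \ref{numcor} through $V^j, V^{j-1}, \dots, V^1$, together with the rc-connectedness afforded by Construction \ref{kfam} and Proposition \ref{rhobound}, bounds $\cycl(X) \subseteq \langle [V^1], \dots, [V^k] \rangle$ up to contributions from splitting components of $\V^j$-cycles (controlled via Lemma \ref{freq} whenever $\dim \loc(\V^j)_{x} > \dim X - i_X$).

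The main obstacle is the degree-$4$ sub-case of the non-unsplit family, where $-K_X \cdot V^j = 2 i_X = \dim X + 1 - i_X$ makes the strict inequality of Theorem \ref{pasqua} and Lemma \ref{freq} fail, and one only obtains $\dim \loc(\V^j)_x \ge \dim X - i_X$ with possible equality. In that regime every reducible $\V^j$-cycle splits into two unsplit pieces of degree $i_X = 2$, each from an unsplit non-dominating family by the minimality of $V^1$; Proposition \ref{rhobound} applied to $\V^j$ together with the Chow families of the splitting components shows that $\cycl(X)$ is spanned by the $[V^i]$'s and these splitting classes, which pairwise sum to $[V^j]$. The delicate step will be to enumerate the distinct splitting types and bound the number of extra classes they contribute, ensuring $\rho_X \le 4$ and thereby ruling out $\rho_X = 5$ in the non-unsplit case.
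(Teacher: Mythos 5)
Your reduction to $i_X=2$ via Theorem (\ref{terzi}) is legitimate and is a genuine shortcut the paper does not use (the paper treats all $i_X\ge 2$ uniformly), and your handling of the all-unsplit case and of the degrees $5$ and $6$ sub-cases is fine. However, there are two concrete gaps in the remaining cases, which are exactly where the paper has to work hardest. First, for $j=1$ you claim that the dimension count from Theorem (\ref{four}) forces $X$ to be rc$(\V^1)$-connected. It does not: in dimension $5$ a general fiber of $\pi_1$ has dimension $\ge 3$, so one only gets $\dim Z^1\le 2$, and the sub-case $-K_X\cdot V^2=2$ with $\dim\loc(V^2)_{x_2}=2$ survives, since then Proposition (\ref{iowifam})(a) only gives $\dim\loc(V^2)\ge 4$ and $V^2$ need not be dominating, so no contradiction with the minimality of $V^1$ arises. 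The paper devotes a separate argument to this configuration (showing $\dim Z^1=2$, that $D:=\loc(V^1,V^2)_{x_1}=\loc(V^2)$ is a divisor meeting a general fiber, $X=\loc(\V^1)_D$, and then $\rho_X=2$ via Lemma (\ref{freq}) and Lemma (\ref{numeq})); you cannot skip it.

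Second, and more seriously, your plan for the degree-$4$ case ($-K_X\cdot V^1=2i_X=4$, $X$ rc$(\V^1)$-connected) does not close. Proposition (\ref{rhobound}) applied to a family that is not quasi-unsplit only says that $\cycl(X)$ is generated by the classes of irreducible components of $\V^1$-cycles; it gives no a priori bound on how many independent such classes occur, so ``enumerating the splitting types'' has nothing to enumerate against and does not yield $\rho_X\le 4$. The paper's key idea here is different: taking the minimal index $j_0$ of a chain link whose components are not proportional to $[V^1]$, producing an irreducible component $Y$ of $\loc(V^1)_{x_1}$ of dimension $\ge 3$ with $\cycx{Y}=\langle[V^1]\rangle$, letting $W$ be an unsplit non-covering family deforming a bad component, and showing $G:=\loc(W)=\loc(W)_Y$ is a divisor with $G\cdot V^1>0$; then Lemma (\ref{freq}) forces $G$ to be trivial on any component not lying in $\langle[V^1],[W]\rangle$, while $G\cdot V^1>0$ forces $G$ to be positive on some component of every reducible cycle, so no such components exist and $\rho_X=2$ by Proposition (\ref{rhobound}). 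A similar divisor dichotomy ($D\cdot V^1>0$ versus $D\cdot V^1=0$, the latter resolved by Corollary (\ref{rhoboundcor})) is also needed in your $j=2$ case, where $\loc(V^2,V^1)_{x_2}$ may well be a divisor rather than all of $X$; your sketch of ``iterating Lemma (\ref{locy}) and Corollary (\ref{numcor})'' glosses over this.
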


\begin{proof}
Clearly we can assume $i_X\ge 2$.
Let $V^1, \dots, V^k$ be families of rational curves as in Construction (\ref{kfam});
by Lemma (\ref{kfamprop}) we get $k(i_X-1)\le 5$, hence $k \le 5$.\\
If $V^j$ is not unsplit for some $j$, then $-K_X \cdot V^j \ge 2i_X \ge 4$, hence, by Lemma (\ref{kfamprop}) 
this can happen for at most one $j$ and implies $k \le 3$.
Notice that, if $-K_X \cdot V^j \ge \dim X+1$, then $-K_X \cdot V^j = \dim X+1$ and $k=j=1$; moreover,
if $j \not = 1$ then $i_X=2$.\par
\medskip
If all the families $V^i$ are unsplit, then $\rho_X =k$ by Proposition (\ref{rhobound}).
Moreover, if $k(i_X-1)=5$, we have $X=(\proj^{i_X-1})^{\rho_X}$ by Lemma (\ref{kfamprop}).\par
\smallskip
We can thus assume that one of these families, say $V^j$, is not unsplit. 
By part (a) of Corollary (\ref{numcor}), we have $\cycl(\loc(V^j)_{x_j}, X)=\langle [V^j] \rangle$
for a general point $x_j \in \loc(V^j)$.\par
\medskip
If $j=3$, then, for a general point $x_3 \in \loc (V^3)$, we have 
$X= \loc(V^3,V^2,V^1)_{{x_3}}$ by Lemma (\ref{locy}).
Therefore, by part (b) of Corollary (\ref{numcor}), we obtain that
$\cycl(X) = \langle [V^1], [V^2], [V^3]\rangle$, so $\rho_X=3$.\par
\medskip
Assume now that $j=2$. We claim that $X$ is rc$(V^1,\V^2)$-connected.\\
 In fact, by part (b) of Proposition (\ref{iowifam}), we have
$\dim \loc(V^2)_{x_2} \ge 3$ for a general point $x_2 \in \loc(V^2)$; therefore
a general fiber of the rc$(V^1,\V^2)$-fibration has dimension at least $\dim \loc(V^2,V^1)_{x_2}$, which
is at least four by Lemma (\ref{locy}).\\
This implies $\dim Z^2 \le 1$, and thus, if $X$ were not rc$(V^1,\V^2)$-connected, we would have
$\dim \loc(V^3)_{x_3}=1$ for a general point $x_3 \in \loc(V^3)$.
Hence, by part (b) of Proposition (\ref{iowifam}), $-K_X \cdot V^3 = 2 =i_X$, so $V^3$ would be unsplit and,
by part (a) of the same proposition, covering, against the minimality of $V^2$.\par
\smallskip

Consider an irreducible component of $\loc(V^2,V^1)_{{x_2}}$
of maximal dimension. By Lemma (\ref{locy}) this dimension is $\ge 4$, hence either 
$X=\loc(V^2,V^1)_{{x_2}}$ and $\rho_X=2$ by part (b) of Corollary (\ref{numcor}),
or this component is a divisor, call it $D$.\\
If $D \cdot V^1 > 0$ then, being $V^1$ covering, we have
$X=\loc(V^1)_D$, and $\rho_X=2$ by Lemma (\ref{numeq}) and part (b) of Corollary (\ref{numcor}).\\
Assume now that $D \cdot V^1=0$. By Lemma (\ref{freq}), $D$ cannot meet components of reducible cycles of $\V^2$
whose classes are not contained in the plane spanned by $[V^1]$ and $[V^2]$ in $N_1(X)$.
So, if there were such a reducible cycle $\Gamma =\Gamma_1 + \Gamma_2$, 
we would have $D \cdot \Gamma_i=0$, hence also
$D \cdot V^2 =0$, and $D$ would be trivial on every component of every cycle in $V^1$ and $\V^2$,
against Corollary~(\ref{rhoboundcor}).\\
It follows that the class of every reducible cycle of $\V^2$
is contained in the plane spanned by $[V^1]$ and $[V^2]$ and $\rho_X=2$ by Proposition
(\ref{rhobound}). \par 
\medskip
Finally assume that $j=1$, {\em i.e.} $V^1$ is not unsplit. Notice that, by the minimality of $V^1$,
we can assume that $X$ has no dominating families of rational curves of anticanonical
degree $< 2i_X$.\par
\smallskip
If $X$ is not rc$(\V^1)$-connected, since a general fiber
of $\pi_1$ contains $\loc(V^1)_{x_1}$ which, by part (b) of Proposition (\ref{iowifam}),
has dimension at least three, then $\dim Z^1 \le 2$.
It follows that,
by part (b) of Proposition (\ref{iowifam}),
$$-K_X \cdot V^2 \le \dim \loc(V^2)_{x_2}+1 \le \dim Z^1+1 \le 3,$$
for a general point $x_2 \in \loc(V^2)$;
hence $V^2$ has anticanonical degree $<2i_X$, so it can not be dominating. This also
implies $\dim Z^1 =2$.\par
\smallskip
For a general point $x_1 \in \loc(V^1)$, we get $\dim \loc(V^1, V^2)_{x_1} \ge 4$ by Lemma~(\ref{locy}).
Since $D:= \loc(V^1, V^2)_{x_1}$ is contained in $\loc(V^2)$ and $V^2$ 
is not dominating we have $D = \loc(V^2)$. In particular $D$ dominates $Z^1$ and so
meets a general fiber of $\pi_1$, which is $\loc(V^1)_{x_1}$,
for some $x_1$, by dimensional reasons. It follows that $X=\loc(\V^1)_D$.\\
By part (b) of Corollary (\ref{numcor}), we have $\cycx{D}= \langle [V^1], [V^2]\rangle$; hence, by Lemma (\ref{freq}),
$D$ cannot meet reducible cycles of $\V^1$
whose classes are not contained in the plane spanned by $[V^1]$ and $[V^2]$.
Therefore $\rho_X=2$ by Lemma (\ref{numeq}).\par
\smallskip
Finally we deal with the case in which $X$ is rc$(\V^1)$-connected; let $x$ be a general point.
Since $x$ is general and $V^1$ is minimal we have $\overline{\loc(V^1)_x}=\loc(V^1)_x$ and 
$\cycx{\loc(V^1)_x}=\langle [V^1] \rangle$ by part (a) of Corollary (\ref{numcor}).\\
If $\loc(V^1)_x=X$, then  $\rho_X =1$. So we can suppose that
$\dim \loc(V^1)_x < 5$, and thus, by part (b) of Proposition  (\ref{iowifam}), $-K_X \cdot V^1 < 3i_X$ 
and $i_X=2$; in particular every reducible cycle parametrized by $\V^1$ has two irreducible components.\par
\smallskip
If every irreducible component of a $\V^1$-cycle in a connected $m$-chain though $x$ is numerically proportional to $V^1$ then $\rho_X=1$ by repeated applications of Lemma (\ref{numeq}).\\
We can thus assume that there exist $m$-chains through $x$, $\Gamma_1 \cup \Gamma_2 \cup \dots \cup \Gamma_m$, with $x \in \Gamma_1$ and $\Gamma_i \cap \Gamma_{i+1} \not = \emptyset$, such that, 
for some $j \in \{1, \dots, m\}$ the irreducible components $\Gamma_j^1$ and $\Gamma_j^2$ of $\Gamma_j$
are not numerically proportional to $\V^1$.\\
Let $j_0 \in \{1, \dots, m\}$ be the minimum integer for which such a chain exists; by the generality of $x$ we have $j_0 \ge 2$.
If $j_0=2$ set $x_1=x$, otherwise let $x_1$ be a point in $\Gamma_{j_0-1} \cap \Gamma_{j_0 -2}$. Since
$\Gamma_{j_0-1} \subset \loc (\V^1)_{x_1}$ there is  an irreducible component $Y$ of $\loc(V^1)_{x_1}$
which meets $\Gamma_{j_0}$. By Corollary (\ref{redfl2}) we have $\dim Y \ge -K_X \cdot V^1 -1  \ge 3$, and,
by Lemma (\ref{numeq}), $\cycx{Y} = \langle [V^1] \rangle$.\\
Let $\gamma$ be a component of $\Gamma_{j_0}$ meeting $Y$ and
denote by $W$ a family of deformations of $\gamma$; then the
family $W$ is unsplit, as $-K_X \cdot V^1 < 3i_X$, and it is not covering, by the minimality of $V^1$.\\
By Lemma (\ref{locy}) we have $\dim \loc(W)_{Y} \ge 4$, hence $\loc(W)=\loc(W)_{Y}$; 
by part (b) of Corollary (\ref{numcor}) we get $\cycl(\loc(W)_{Y},X)= \langle [V^1], [W] \rangle$.\\
Denote by $G$ the divisor $\loc(W)$; since $G$ meets $Y$
and does not contain it, being $x$ general, we have $G \cdot V^1 >0$; therefore $X=\loc(\V^1)_G$.
Since $G \cdot V^1 >0$ we also have that, if $\Gamma_1 + \Gamma_2$
is a reducible cycle parametrized by $\V^1$, then $G \cdot \Gamma_i >0$ for at least one $i=1,2$.\\
On the other hand, in view of Lemma (\ref{freq}), $G$ must be trivial on every irreducible component of
a cycle in $\V^1$ not contained in the plane spanned by $[V^1]$ and $[W]$.
Therefore there are no such cycles, and $\rho_X=2$ by Proposition (\ref{rhobound}).  
\end{proof}

\bigskip

\noindent
\small{{\bf Acknowledgements}. 
We would like to thank the referee for his corrections and suggestions, in particular for finding a serious gap
in one of our arguments. We also thank Edoardo Ballico, Riccardo Ghiloni and Roberto Pignatelli for helpful discussions.

\end{document}